\documentclass{amsart}

\usepackage{amsmath,amssymb,amsfonts,enumerate,amsthm, amscd}

\renewcommand{\Im}{\mbox{Im}\,}

\newcommand{\pd}{\mbox{pd}\,}
\newcommand{\id}{\mbox{id}\,}
\newcommand{\fd}{\mbox{fd}\,}
\newcommand{\gd}{\mbox{Gdim}\,}
\newcommand{\wdim}{\mbox{wdim}\,}
\newcommand{\gldim}{\mbox{gldim}\,}

\setcounter{section}{0}
\newtheorem{theorem}{Theorem}[section]
\newtheorem{lemma}[theorem]{Lemma}
\newtheorem{proposition}[theorem]{Proposition}
\newtheorem{corollary}[theorem]{Corollary}

\theoremstyle{definition}
\newtheorem{definition}[theorem]{Definition}
\newtheorem{definitions}[theorem]{Definitions}
\theoremstyle{remark}
\newtheorem{remark}[theorem]{Remark}
\newtheorem{example}[theorem]{Example}

\theoremstyle{Definition and Notation}

\begin{document}
\bibliographystyle{amsplain}

%\date{}
\title[Strongly $n$-Gorenstein...]{Strongly $n$-Gorenstein projective, injective
and flat modules}

\author{Najib Mahdou}
\address{Najib Mahdou\\Department of Mathematics, Faculty of Science and Technology of Fez, Box 2202, University S.M. Ben Abdellah Fez, Morocco.}

\author{Mohammed Tamekkante}
\address{Mohammed Tamekkante\\Department of Mathematics, Faculty of Science and Technology of Fez, Box 2202, University S.M. Ben Abdellah Fez, Morocco.}

\keywords{Strongly n-Gorenstein projective, injective and flat
modules, Gorenstein global dimension}

\subjclass[2000]{13D05, 13D02}

\begin{abstract}

This paper generalize the idea of the authors in \cite{Bennis and
Mahdou1}. Namely, we define and  study a particular case of modules
with Gorenstein projective, injective, and flat dimension less or
equal than $n\geq 0$ , which we call, respectively, strongly
n-Gorenstein projective, injective and flat modules. These three
classes of modules give us a new characterization of the first
modules, and they are a generalization of the notions of strongly
Gorenstein projective, injective, and flat modules respectively.

\end{abstract}

\maketitle
%%%%%%%%%%%%%%%%%%%%%%%%%%%%%%%%%%%%%%%%%%%%%%%%%%%%%%%%%
%%%%%%%%%%%%%%%%%%%%%%%%%%%%%%%%%%%%%%%%%%%%%%%%%%%%%%%%%
%%Introduction%%%
%%%%%%%%%%%%%%%%%%%%%%%%%%%%%%%%%%%%%%%%

\section{Introduction}

Throughout this paper, all rings are commutative with identity
element, and all modules are unital.

Let $R$ be a ring, and let $M$ be an $R$-module. As usual we use
$\pd_R(M)$, $\id_R(M)$ and $\fd_R(M)$ to denote, respectively, the
classical projective dimension, injective dimension and flat
dimension of $M$. By $\gldim(R)$ and $\wdim(R)$ we denote,
respectively, the classical global dimension and weak dimension of
R. It is convenient to use ``local" to refer to (not necessarily
Noetherian) rings with a unique maximal ideal.

For a two-sided Noetherian ring $R$, Auslander and Bridger \cite{Aus
bri} introduced the $G$-dimension, $\gd_R (M)$, for every finitely
generated $R$-module $M$. They proved the inequality $\gd_R (M)\leq
\pd_R (M)$ with equality $\gd_R (M) = \pd_R (M)$ when $\pd_R (M)$ is
finite.

Several decades later, Enochs and Jenda \cite{Enochs,Enochs2}
defined the notion of Gorenstein projective dimension
($G$-projective dimension for short), as an extension of
$G$-dimension to modules that are not necessarily finitely
generated, and the Gorenstein injective dimension ($G$-injective
dimension for short) as a dual notion of Gorenstein projective
dimension. Then, to complete the analogy with the classical
homological dimension, Enochs, Jenda and Torrecillas \cite{Eno Jenda
Torrecillas} introduced the Gorenstein flat dimension. Some
references are
 \cite{Christensen, Christensen
and Frankild, Enochs, Enochs2, Eno Jenda Torrecillas}.

In 2004, Holm \cite{Holm} generalized several results which are
already obtained over Noetherian rings to associative rings.

Recently in \cite{Bennis and Mahdou2}, the authors started the study
of global Gorenstein dimensions of rings, which are called, for a
commutative ring $R$, projective, injective, and weak dimensions of
$R$, denoted by $GPD(R)$, $GID(R)$, and $G.wdim(R)$, respectively,
and, respectively, defined as follows:\bigskip

$\begin{array}{cccc}
  1) & GPD(R) & = & sup\{ Gpd_R(M)\mid M$ $R-module\} \\
  2) & GID(R) & = & sup\{ Gid_R(M)\mid M$ $R-module\} \\
  3) & G.wdim(R) & = & sup\{ Gfd_R(M)\mid M$ $R-module\}
\end{array}$
\\

They proved that, for any ring R, $ G.wdim(R)\leq GID(R) = GPD(R)$
(\cite[Theorems 2.1 and 2.11]{Bennis and Mahdou2}). So, according to
the terminology of the classical theory of homological dimensions of
rings, the common value of $GPD(R)$ and $GID(R)$ is called
Gorenstein global dimension of $R$, and denoted by $G.gldim(R)$.\\
They also proved that the Gorenstein global and weak dimensions are
refinement of the classical global  and weak dimensions of rings.
That is : $G.gldim(R) \leq gldim(R)$ and $G.wdim(R)\leq wdim(R)$
with equality if $wdim(R)$ is finite (\cite[Propositions
2.12]{Bennis and Mahdou2}).\bigskip

In \cite{Bennis and Mahdou1}, the authors introduced a particular
case of Gorenstein projective, injective, and flat modules, which
are defined, respectively, as follows:
\begin{definitions}\
\begin{enumerate}
    \item A module $M$ is said to be strongly Gorenstein projective ($SG$-projective for short), if
there exists an exact sequence of   projective modules of the form:
$$\mathbf{P}=\ \cdots\rightarrow P\stackrel{f}\rightarrow
P\stackrel{f} \rightarrow P\stackrel{f}
         \rightarrow P \rightarrow\cdots$$ such that  $M \cong \Im(f)$ and such that $Hom(-,Q)$ leaves $\mathbf{P}$ exact
whenever $Q$ is a projective module.

The exact sequence $\mathbf{P}$ is called a strongly complete
projective resolution and denoted by $(P,f)$.
    \item The strongly Gorenstein injective module
is defined dually.
    \item A module $M$ is said to be strongly
Gorenstein flat ($SG$-flat for short), if there exists an exact
sequence of flat modules  of the form:
$$\mathbf{F}=\ \cdots\rightarrow F\stackrel{f}\rightarrow F \stackrel{f}\rightarrow
F \stackrel{f}\rightarrow F \rightarrow\cdots$$ such that  $M \cong
\Im(f)$ and such that $I\otimes-$ leaves $\mathbf{F}$ exact whenever
$I$ is an injective module. The exact sequence $\mathbf{F}$ is
called a strongly complete flat resolution and denoted by $(F,f)$.
\end{enumerate}
\end{definitions}
The principal role of the strongly Gorenstein projective and
injective modules is to give a simple characterization of Gorenstein
projective and injective modules, respectively, as follows:
\begin{theorem}[\cite{Bennis and Mahdou1},Theorem 2.7] A  module is Gorenstein projective (resp., injective)
if, and only if, it is a direct summand of a strongly Gorenstein
projective (resp., injective) module.

\end{theorem}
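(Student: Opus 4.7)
The ``if'' direction is immediate: a strongly Gorenstein projective (resp., injective) module is, by inspection of the definitions, Gorenstein projective (resp., injective); and by \cite{Holm} the class of Gorenstein projective (resp., injective) modules is closed under direct summands.

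For the ``only if'' direction I treat the projective case in detail; the injective case will be dual. Given a Gorenstein projective module $M$, I fix a complete projective resolution
$$\mathbf{P}\colon\ \cdots\to P_{n+1}\xrightarrow{d_{n+1}}P_n\xrightarrow{d_n}P_{n-1}\to\cdots$$
with $M=\ker d_0$, and set $K_n:=\ker d_n=\Im d_{n+1}$. The plan is to exhibit a single strongly Gorenstein projective module $N$ having every $K_n$, and in particular $M$, as a direct summand.

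The construction is to ``fold up'' $\mathbf{P}$ diagonally. Set $P:=\bigoplus_{n\in\mathbb{Z}}P_n$, which is projective, and define $f\colon P\to P$ by sending $P_n$ into $P_{n-1}\subseteq P$ via $d_n$. The resulting periodic complex
$$\bar{\mathbf{P}}\colon\ \cdots\to P\xrightarrow{f}P\xrightarrow{f}P\to\cdots$$
is naturally identified with $\bigoplus_{k\in\mathbb{Z}}\mathbf{P}[k]$, the direct sum of all shifts of $\mathbf{P}$. A componentwise computation gives $\ker f=\bigoplus_n K_n=\Im f$, so $\bar{\mathbf{P}}$ is exact and $N:=\Im f=\bigoplus_n K_n$ contains every $K_n$, hence $M$, as a direct summand.

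The main technical point, and the only step that requires care, is showing that $\Hom(-,Q)$ leaves $\bar{\mathbf{P}}$ exact for every projective $Q$. For this, I use the canonical identification
$$\Hom\Bigl(\bigoplus_{k}\mathbf{P}[k],\,Q\Bigr)\;=\;\prod_{k}\Hom(\mathbf{P}[k],Q),$$
which turns the $\Hom$-complex into a direct product of shifted copies of $\Hom(\mathbf{P},Q)$. Each factor is exact by the defining property of $\mathbf{P}$, and arbitrary products of exact sequences of modules are exact; hence so is $\Hom(\bar{\mathbf{P}},Q)$. Thus $(P,f)$ is a strongly complete projective resolution, $N$ is strongly Gorenstein projective, and $M$ is a direct summand of $N$. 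The injective case is dual: replace $\bigoplus$ by $\prod$ throughout (a product of injectives is injective), form the analogous endomorphism $g\colon I\to I$ on $I:=\prod_n I^n$ from a complete injective resolution of $M$, and use that $\Hom(J,-)$ commutes with products while arbitrary direct sums of exact sequences of modules are exact.
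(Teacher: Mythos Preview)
Your argument is correct and is the standard ``folding'' construction. Note, however, that the present paper does not actually prove this statement: it is quoted verbatim from \cite[Theorem~2.7]{Bennis and Mahdou1}, and the paper's own main result (Theorem~\ref{direct summand SGP}) invokes it as the base case $n=0$. Your proof is essentially the one given in that cited reference: take a complete projective resolution $\mathbf{P}$ of $M$, form $P=\bigoplus_n P_n$ with the shift endomorphism $f$, and observe that $(P,f)$ is a strongly complete projective resolution whose image $\bigoplus_n K_n$ contains $M$ as a summand. The identification $\bar{\mathbf{P}}\cong\bigoplus_k \mathbf{P}[k]$ and the passage $\Hom(\bigoplus_k\mathbf{P}[k],Q)\cong\prod_k\Hom(\mathbf{P}[k],Q)$ are exactly right.

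One small slip in your injective dual: after setting $I=\prod_n I^n$, the periodic complex $(I,g)$ identifies with $\prod_k\mathbf{I}[k]$, and both its exactness and the exactness of $\Hom(J,\prod_k\mathbf{I}[k])\cong\prod_k\Hom(J,\mathbf{I}[k])$ come from the fact that \emph{products} of exact sequences are exact, not direct sums. Your parenthetical ``arbitrary direct sums of exact sequences of modules are exact'' is not the relevant fact here (and using $\bigoplus_n I^n$ instead would fail, since a direct sum of injectives need not be injective over a non-Noetherian ring). With ``products'' in place of ``direct sums'', the dual goes through verbatim.
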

Using \cite[Theorem 3.5]{Bennis and Mahdou1} together with
\cite[Theorem 3.7]{Holm}, we have the next result:
\begin{proposition} Let $R$ be a coherent ring. A module is Gorenstein flat
if, and only if, it is a direct summand of a strongly Gorenstein
flat module.
\end{proposition}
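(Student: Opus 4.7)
The plan is to chain the two cited results, exactly mirroring the strategy used for the projective/injective case in Theorem~2.7 above. First I would establish the ``if'' direction, which is the easier half. By the very definition of strongly Gorenstein flat, the defining doubly infinite exact sequence $(F,f)$ is itself a complete flat resolution with $M \cong \Im(f)$, so every strongly Gorenstein flat module is Gorenstein flat. The coherence of $R$ now enters through \cite[Theorem 3.7]{Holm}, which yields that the class of Gorenstein flat modules is closed under direct summands. Hence any direct summand of a strongly Gorenstein flat module is Gorenstein flat.

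For the converse, the idea is to invoke \cite[Theorem 3.5]{Bennis and Mahdou1}, whose conclusion is precisely that each Gorenstein flat module is a direct summand of a strongly Gorenstein flat one. This theorem, however, is formulated under a closure hypothesis on the class of Gorenstein flat modules (for instance closure under direct summands, or under extensions, analogous to the projective/injective situation). The task reduces to verifying that this closure hypothesis is available whenever $R$ is coherent.

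The verification comes for free from \cite[Theorem 3.7]{Holm}, which supplies exactly the needed closure property in the coherent setting. Feeding this into \cite[Theorem 3.5]{Bennis and Mahdou1} produces, for every Gorenstein flat $M$, an exact sequence of the form $(F,f)$ whose image $N$ is strongly Gorenstein flat and contains $M$ as a direct summand. The two implications together yield the biconditional claimed in the proposition.

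The only real obstacle is bookkeeping: one must check that the hypothesis under which \cite[Theorem 3.5]{Bennis and Mahdou1} was proved coincides with a property of the class of Gorenstein flat modules that Holm actually establishes over coherent rings. Once this matching is confirmed, no new argument is required and the proposition is a direct consequence of combining the two references.
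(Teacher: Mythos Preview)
Your proposal is correct and matches the paper's own justification, which is literally the one-line remark preceding the proposition: combine \cite[Theorem 3.5]{Bennis and Mahdou1} with \cite[Theorem 3.7]{Holm}. One minor clarification on the bookkeeping you flag: \cite[Theorem 3.5]{Bennis and Mahdou1} already gives the ``only if'' direction unconditionally (the direct-sum-of-syzygies construction requires no coherence), so \cite[Theorem 3.7]{Holm} is invoked solely for the ``if'' direction, to guarantee that Gorenstein flatness passes to direct summands over a coherent ring.
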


 This result allows us to show that the strongly Gorenstein
projective, injective and flat modules have simpler
characterizations than their Gorenstein correspondent modules. For
instance:
\begin{theorem}[\cite{Bennis and Mahdou1}, Propositions 2.9 and
3.6]\label{caracterization of SG}\
\begin{enumerate}
    \item  A module M is strongly Gorenstein
projective if, and only if, there exists a short exact sequence of
modules: $0\longrightarrow M \longrightarrow P \longrightarrow M
\longrightarrow 0$ where $P$ is projective and $Ext(M,Q) = 0$ for
any projective module $Q$.
    \item A module M is strongly Gorenstein
injective if, and only if, there exists a short exact sequence of
modules: $0\longrightarrow M \longrightarrow I \longrightarrow M
\longrightarrow 0$ where $I$ is injective and $Ext(E,M) = 0$ for any
injective module $E$.
    \item A module M is strongly Gorenstein
flat  if, and only if, there exists a short exact sequence of
modules: $0\longrightarrow M \longrightarrow F \longrightarrow M
\longrightarrow 0$ where $F$ is flat and $Tor(M,I) = 0$ for any
injective  module $I$.
\end{enumerate}
\end{theorem}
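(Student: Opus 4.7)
\emph{Strategy.} The plan is to prove part \emph{(1)} in detail and deduce parts \emph{(2)} and \emph{(3)} by the same argument: \emph{(2)} is the categorical dual (swap projectives $\leftrightarrow$ injectives, $\Hom(-,Q)\leftrightarrow\Hom(E,-)$) and \emph{(3)} is the tensor analogue ($\Hom(-,Q)\leftrightarrow I\otimes -$, $\Ext^{1}\leftrightarrow\Tor_{1}$). In every case each direction reduces to one of two simple operations: \textbf{splitting} the doubly infinite complex of the definition at its image (to produce the short exact sequence of the theorem), or \textbf{splicing} that short exact sequence (to recover the doubly infinite complex).

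\emph{Forward direction of (1).} Starting from a strongly complete projective resolution $(P,f)$ for $M$, I would factor $f=\iota\circ\pi$ with $\pi\colon P\twoheadrightarrow M$ and $\iota\colon M\hookrightarrow P$; this yields the short exact sequence $0\to M\to P\to M\to 0$. Applying $\Hom(-,Q)$ to this short exact sequence and using $\Ext^{1}(P,Q)=0$, the vanishing $\Ext^{1}(M,Q)=0$ reduces to the surjectivity of $\iota^{*}\colon\Hom(P,Q)\to\Hom(M,Q)$. The hypothesis that $\Hom(-,Q)$ leaves the doubly infinite complex exact, combined with the factorization $f^{*}=\pi^{*}\circ\iota^{*}$ and the injectivity of $\pi^{*}$ (which comes from surjectivity of $\pi$), is exactly what delivers this surjectivity.

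\emph{Backward direction of (1).} Given the short exact sequence $0\to M\xrightarrow{\iota}P\xrightarrow{\pi}M\to 0$ and the Ext vanishing hypothesis, I would set $f:=\iota\circ\pi\colon P\to P$. Then $\Ker f=\Im f\cong M$, so splicing countably many copies of the short exact sequence delivers the infinite exact complex $\mathbf{P}$ with $M\cong \Im f$. Applying $\Hom(-,Q)$ to the short exact sequence and invoking $\Ext^{1}(M,Q)=0$ produces a short exact sequence of Hom groups; the identities $\Ker(f^{*})=\Ker(\iota^{*})=\Im(\pi^{*})$ and $\Im(f^{*})=\pi^{*}(\Im(\iota^{*}))=\Im(\pi^{*})$ then certify that the Hom'd infinite complex is exact at every position.

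\emph{Main obstacle, and parts (2) and (3).} The one point that needs care is the equivalence between Hom-exactness of $\mathbf{P}$ and the pointwise vanishing $\Ext^{1}(M,Q)=0$; everything else---the splitting and the splicing---is formal. This same bookkeeping carries over verbatim to the injective case \emph{(2)} with $\Hom(E,-)$ in place of $\Hom(-,Q)$ and $\Ext^{1}(E,I)=0$ in the role of $\Ext^{1}(P,Q)=0$, and to the flat case \emph{(3)} with $I\otimes -$ in place of $\Hom(-,Q)$ and $\Tor_{1}(F,I)=0$ (which holds because $F$ is flat) in the role of $\Ext^{1}(P,Q)=0$.
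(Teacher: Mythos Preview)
The paper does not prove this theorem; it is quoted verbatim from \cite{Bennis and Mahdou1} (Propositions~2.9 and~3.6 there) and used as a black box throughout, so there is no in-paper argument to compare your proposal against.

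For what it is worth, your plan is correct and is the standard argument: a periodic exact complex $\cdots\to P\stackrel{f}\to P\to\cdots$ with $M\cong\Im f$ is exactly the splice of countably many copies of the short exact sequence $0\to M\stackrel{\iota}\to P\stackrel{\pi}\to M\to 0$, and a contravariant left-exact functor $\Hom(-,Q)$ keeps the infinite complex exact if and only if it keeps that single short sequence exact, which (since $\Ext^1(P,Q)=0$) is the surjectivity of $\iota^{*}$, i.e.\ $\Ext^1(M,Q)=0$. Your handling of the ``main obstacle'' via the factorization $f^{*}=\pi^{*}\circ\iota^{*}$ with $\pi^{*}$ injective is precisely what is needed, and the dual and tensor variants for (2) and (3) go through verbatim as you say.
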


Along this paper we need the following Lemmas:

\begin{lemma}\label{lemma1}
Let $0\rightarrow N\rightarrow N' \rightarrow N'' \rightarrow 0$ be
an exact sequence of $R$-modules. Then:
\begin{enumerate}
  \item $Gpd_R(N)\leq max\{Gpd_R(N'),Gpd_R(N'')-1\}$ with equality
  if $Gpd_R(N')\neq Gpd_R(N'')$.
  \item $Gpd_R(N')\leq max\{Gpd_R(N),Gpd_R(N")\}$ with equality
  if $Gpd_R(N'')\neq Gpd_R(N)+1$.
  \item $Gpd_R(N'')\leq max\{Gpd_R(N'),Gpd_R(N)+1\}$ with equality
  if $Gpd_R(N')\neq Gpd_R(N)$.
\end{enumerate}
\end{lemma}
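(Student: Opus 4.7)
The plan is to adapt the classical proof of the analogous inequalities for ordinary projective dimension, replacing $\pd_R$ by $\gpd_R$. The essential tool is Holm's characterization of finite Gorenstein projective dimension by Ext-vanishing against modules of finite projective dimension: for any $R$-module $M$ with $\gpd_R(M)<\infty$,
$$\gpd_R(M)=\sup\{\,i\geq 0 : \Ext^i_R(M,L)\neq 0 \text{ for some } L \text{ with } \pd_R(L)<\infty\,\}.$$
This reduces each inequality to vanishing statements about ordinary Ext groups, which is exactly the classical setting.

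First I would dispose of the cases in which one or more of the three Gorenstein projective dimensions is infinite. The inequalities are vacuous whenever the right-hand side is $\infty$, and the converse configurations are ruled out by the standard fact (also due to Holm) that in a short exact sequence, finiteness of any two of $\gpd_R(N), \gpd_R(N'), \gpd_R(N'')$ forces finiteness of the third. Thus only the all-finite case carries content.

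Assume then that $a:=\gpd_R(N)$, $b:=\gpd_R(N')$, $c:=\gpd_R(N'')$ are all finite. I would fix an arbitrary $L$ with $\pd_R(L)<\infty$ and apply $\Hom_R(-,L)$ to the given sequence to obtain the long exact sequence
$$\cdots\to \Ext^i_R(N'',L)\to \Ext^i_R(N',L)\to \Ext^i_R(N,L)\to \Ext^{i+1}_R(N'',L)\to\cdots$$
From three consecutive terms of this sequence, the usual two-out-of-three argument for vanishing yields each of the three inequalities in (1)--(3) via Holm's characterization. For the equality clauses, I would exploit that whenever the two candidate values on the right differ, the maximum is attained at exactly one of them, so at the critical degree the relevant connecting or middle term in the long exact sequence cannot be cancelled; this forces nonvanishing of $\Ext$ on the module whose dimension is being bounded, and therefore equality.

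The main obstacle is the bookkeeping for the three separate cases: one must track which module sits in which position of the Ext sequence for each inequality, and carefully choose the test module $L$ (realizing the supremum for whichever module achieves the strict maximum) so that nonvanishing on one side transfers to nonvanishing on the other. Apart from this degree-shifting care, the argument is a mechanical transcription of the classical projective-dimension proof into the Gorenstein framework via Holm's Ext characterization.
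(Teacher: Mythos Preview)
Your proposal is correct and follows essentially the same approach as the paper: the paper's proof simply cites Holm's Theorems 2.20 and 2.24 (the Ext-characterization of finite $\gpd$ and the two-out-of-three finiteness result) and declares the argument analogous to Bourbaki's classical proof for projective dimension, which is precisely what you have spelled out in detail.
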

\begin{proof}
Using \cite[Theorems 2.20 and 2.24]{Holm} the argument is analogous
to the one of \cite[Corollary 2, p. 135]{bourbaki}.
\end{proof}
Dually we have:
\begin{lemma}\label{injective version}
Let $0\rightarrow N\rightarrow N' \rightarrow N'' \rightarrow 0$ be
an exact sequence of $R$-modules. Then:
\begin{enumerate}
  \item $Gid_R(N)\leq max\{Gid_R(N'),Gid_R(N'')+1\}$ with equality
  if $Gid_R(N')\neq Gid_R(N'')$.
  \item $Gid_R(N')\leq max\{Gid_R(N),Gid_R(N'')\}$ with equality
  if $Gid_R(N'')+1\neq Gid_R(N)$.
  \item $Gid_R(N'')\leq max\{Gid_R(N'),Gid_R(N)-1\}$ with equality
  if $Gid_R(N')\neq Gid_R(N)$.
\end{enumerate}
\end{lemma}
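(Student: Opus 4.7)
The plan is to establish this lemma as the exact dual of Lemma~\ref{lemma1}, following the same two-step recipe. First, one invokes the dual analogues of \cite[Theorems 2.20 and 2.24]{Holm}, namely Holm's characterizations of $\gid_R(M)$ via vanishing of $\Ext^{i}_R(E,M)$ for injective modules $E$ in large degrees, and via coresolutions by injective (or Gorenstein injective) modules. These make the invariant $\gid_R$ compatible with long exact sequences of $\Ext$ in exactly the form needed for the Bourbaki-style argument.

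Second, given the short exact sequence $0 \lo N \lo N' \lo N'' \lo 0$ and any injective module $E$, one applies the left-exact functor $\Hom_R(E,-)$ to obtain the long exact sequence
\[
\cdots \lo \Ext^{i}_R(E,N) \lo \Ext^{i}_R(E,N') \lo \Ext^{i}_R(E,N'') \lo \Ext^{i+1}_R(E,N) \lo \cdots
\]
The three inequalities then follow by standard dimension bookkeeping. For inequality (1), if both $\Ext^{i}_R(E,N')$ and $\Ext^{i-1}_R(E,N'')$ vanish for every injective $E$ and every $i$ above the threshold $\max\{\gid_R(N'),\gid_R(N'')+1\}$, then the long exact sequence forces $\Ext^{i}_R(E,N)=0$ in the same range, and Holm's characterization yields the asserted bound. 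Inequalities (2) and (3) are obtained by the analogous three-term bookkeeping, with the shift by $+1$ or $-1$ placed on the appropriate term dictated by the position of $N$, $N'$, $N''$ in the sequence, which in each case flips sign relative to Lemma~\ref{lemma1} because we are now resolving in the second argument of $\Ext$ rather than the first.

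For the equality assertions, one mimics the classical argument of \cite[Corollary~2, p.~135]{bourbaki}: whenever the two ``comparison'' invariants are unequal in the prescribed way, one can locate a critical degree at which exactly one $\Ext$ term fails to vanish while its two neighbors in the long exact sequence do, pinning the remaining Gorenstein injective dimension to the stated maximum. The main obstacle I anticipate is the edge-case analysis when one or more of the three invariants is infinite; this is handled by the convention that the bound is vacuous in that situation, together with the uniform ``$\Ext$ vanishing in all degrees $>n$'' form of Holm's characterization, which is precisely the bridge that promotes the classical injective-dimension proof to the Gorenstein setting without further hypotheses on $R$.
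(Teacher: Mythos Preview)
Your proposal is correct and follows essentially the same approach as the paper: the paper simply records this lemma as the dual of Lemma~\ref{lemma1}, whose proof in turn cites \cite[Theorems 2.20 and 2.24]{Holm} together with the classical argument of \cite[Corollary~2, p.~135]{bourbaki}, and your write-up spells out precisely that dualization (using the Gorenstein injective analogues, Holm's Theorems 2.22 and 2.25, plus the long exact sequence in the second variable of $\Ext$). Your remark that the infinite-dimension cases are absorbed by Holm's ``two finite $\Rightarrow$ third finite'' result (Theorem~2.25) is exactly the role that \cite[Theorem 2.24]{Holm} plays in the projective version.
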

And using \cite[Proposition 3.11]{Holm} and Lemma \ref{injective
version} we get the following Lemma

\begin{lemma}\label{lemma3}
Let $0\rightarrow N\rightarrow N' \rightarrow N'' \rightarrow 0$ be
an exact sequence of modules over a coherent ring $R$. Then:
\begin{enumerate}
  \item $Gfd_R(N)\leq max\{Gfd_R(N'),Gfd_R(N'')-1\}$ with equality
  if $Gfd_R(N')\neq Gfd_R(N'')$.
  \item $Gfd_R(N')\leq max\{Gfd_R(N),Gfd_R(N'')\}$ with equality
  if $Gfd_R(N'')\neq Gfd_R(N)+1$.
  \item $Gfd_R(N'')\leq max\{Gfd_R(N'),Gfd_R(N)+1\}$ with equality
  if $Gfd_R(N')\neq Gfd_R(N)$.
\end{enumerate}
\end{lemma}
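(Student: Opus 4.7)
The plan is to reduce the Gorenstein flat statement to its Gorenstein injective counterpart (Lemma~\ref{injective version}) via character-module duality, using the cited Proposition~3.11 of Holm, which over a coherent ring identifies $Gfd_R(M)=Gid_R(M^+)$, where $M^+=\Hom_{\mathbb Z}(M,\mathbb Q/\mathbb Z)$ denotes the Pontryagin dual.

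The first step is to apply the functor $(-)^+$ to the given short exact sequence. Since $\mathbb Q/\mathbb Z$ is an injective $\mathbb Z$-module, the functor is exact and \emph{contravariant}, so we obtain an exact sequence
$$0\longrightarrow (N'')^+\longrightarrow (N')^+\longrightarrow N^+\longrightarrow 0.$$
Note the reversal of the three modules: $(N'')^+$ now sits in the submodule position, $N^+$ in the quotient position.

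Next I would apply Lemma~\ref{injective version} to this dualized sequence, matching $N\leftrightarrow (N'')^+$, $N'\leftrightarrow (N')^+$, $N''\leftrightarrow N^+$. Part~(3) of the injective lemma, applied to the dual sequence, yields $Gid_R(N^+)\leq\max\{Gid_R((N')^+),Gid_R((N'')^+)-1\}$, with equality whenever $Gid_R((N')^+)\neq Gid_R((N'')^+)$. Translating via $Gfd_R(M)=Gid_R(M^+)$ from Holm's Proposition~3.11, this reads exactly as assertion~(1) of the present lemma. Parts~(2) and~(3) of the present lemma follow in the same way from parts~(2) and~(1) of Lemma~\ref{injective version}, respectively, applied to the same dual sequence (the reversal of positions is precisely what swaps the roles of parts (1) and (3) when one passes between the flat and the injective formulations).

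The only subtlety, and the spot I would verify carefully, is the bookkeeping that the reversal of the three modules induced by $(-)^+$ correctly matches the three inequalities and the three equality hypotheses across the two lemmas; once this is checked, no further homological work is required, because the exactness of the dual sequence and Holm's equality $Gfd_R(M)=Gid_R(M^+)$ reduce everything to Lemma~\ref{injective version}. No hypothesis beyond coherence of $R$ is used, and this is exactly what Holm's Proposition~3.11 requires.
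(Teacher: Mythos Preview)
Your proof is correct and follows exactly the route the paper indicates: dualize the short exact sequence via the character-module functor, invoke Holm's Proposition~3.11 to convert Gorenstein flat dimensions into Gorenstein injective dimensions of the duals, and then read off the three assertions from Lemma~\ref{injective version} with the roles of the outer terms swapped. In fact you have supplied more detail than the paper itself, which merely states that the lemma follows from \cite[Proposition~3.11]{Holm} together with Lemma~\ref{injective version}.
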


In \cite{Holm}, Holm gives a characterization of modules with finite
Gorenstein projective, injective and flat modules (\cite[Theorems
2.20, 2.22 and 3.14]{Holm}). In this three characterizations, Holm
impose the finitely of this dimensions. Almost by definition one has
the inclusion
$$\{M| pd(M)\leq n\}\subseteq \{M|Gpd(M)\leq n\}.$$
The main idea of this paper is to introduce and study an
intermediate class of modules called strongly $n$-Gorenstein
projective modules. Similarly, we define the strongly $n$-Gorenstein
injective and flat modules.\\
The simplicity of these modules manifests in the fact that they has
simpler characterizations than their corresponding Gorenstein
modules. Moreover, with such modules, we are able to give nice new
characterizations of modules with Gorenstein projective, injective
and flat dimensions equal to $n$.

%%%%%%%%%%%%%%%%%%%%%%%%%%%%%%%%%%%%%%%%%%%%%%%%%%%%%%%%%
%%%%%%%%%%%%%%%%%%%%%%%%%%%%%%%%%%%%%%%%%%%%%%%%%%%%%%%%%
%%Section1%%%%%%%%%%%%%%%%%%%%%%%%%%%%%%%%%%%%%%%%%%%
%%%%%%%%%%%%%%%%%%%%%%%%%%%%%%%%%%%%%%%%

\begin{section}{Strongly $n$-Gorenstein projective and injective modules}

In this section, we introduce and study strongly $n$-Gorenstein
projective and injective modules which are defined as follows:
\begin{definitions}
Let $n$ be a positive integer.
\begin{enumerate}
  \item An $R$-module $M$ is said to be strongly $n$-Gorenstein projective,
if there exists a short exact sequence
$$0\longrightarrow M \longrightarrow P \longrightarrow M
\longrightarrow 0$$ where $pd(P)\leq n$ and $Ext^{n+1}(M,Q)=0$
whenever $Q$ is projective.
  \item An $R$-module $M$ is said to be strongly $n$-Gorenstein injective,
if there exists a short exact sequence
$$0\longrightarrow M \longrightarrow I \longrightarrow M
\longrightarrow 0$$ where $id(I)\leq n$ and $Ext^{n+1}(E,M)=0$
whenever $E$ is injective.
\end{enumerate}
\end{definitions}

A direct consequence of the above definition is that, the strongly
$0$-Gorenstein projective modules are just the strongly Gorenstein
projective modules (by \cite[Proposition 2.9]{Bennis and Mahdou1}).
Also every module with finite projective dimension less or equal
than $n$ is a strongly $n$-Gorenstein projective module and  we
have:
\begin{proposition}\label{n-stron implis n-Gorenstein}
Let $n$ be a positive integer and  $M$ be a strongly $n$-Gorenstein
projective module. Then, the following hold:
\begin{enumerate}
  \item If $0\rightarrow N \rightarrow P_n\rightarrow ...
\rightarrow P_1 \rightarrow M \rightarrow 0$ is an exact sequence
where all $P_i$ are projective, then $N$ is strongly Gorenstein
projective module and consequently $Gpd(M)\leq n$.
  \item Moreover, if $0\longrightarrow M \longrightarrow P \longrightarrow M
\longrightarrow 0$ is a short exact  sequence where $pd(P)< \infty$
then $Gpd(M)=pd(P)$ and consequently $M$ is strongly $k$-Gorenstein
projective module with $k:=pd(P)$.
\end{enumerate}

\end{proposition}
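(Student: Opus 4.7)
The plan for part (1) is to verify the characterization of Theorem \ref{caracterization of SG}(1) for $N$: I need a short exact sequence $0 \to N \to K \to N \to 0$ with $K$ projective together with the vanishing $Ext^1(N, Q) = 0$ for every projective $Q$. I would apply the Horseshoe Lemma to the defining sequence $0 \to M \to P \to M \to 0$, using two copies of the given resolution $0 \to N \to P_n \to \cdots \to P_1 \to M \to 0$. This produces an exact sequence $0 \to K \to P_n \oplus P_n \to \cdots \to P_1 \oplus P_1 \to P \to 0$ whose leftmost term $K$ sits in a short exact sequence $0 \to N \to K \to N \to 0$. A standard dimension-shifting argument combined with $pd(P) \leq n$ then yields $Ext^i(K, L) \cong Ext^{n+i}(P, L) = 0$ for every $i \geq 1$ and every module $L$, so $K$ is projective.

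Next, another round of dimension shifting, this time along the $n$-step resolution of $M$, gives $Ext^1(N, Q) \cong Ext^{n+1}(M, Q) = 0$ for every projective $Q$, the final equality being built into the definition of strongly $n$-Gorenstein projective. Theorem \ref{caracterization of SG}(1) then concludes that $N$ is strongly Gorenstein projective, and in particular $Gpd(N) = 0$. Combined with the $n$-step projective resolution ending in $N$, this forces $Gpd(M) \leq n$.

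For part (2), fix a sequence $0 \to M \to P \to M \to 0$ with $pd(P) < \infty$. By part (1), $Gpd(M) \leq n$ is finite, and since $pd(P)$ is finite one has $Gpd(P) = pd(P)$. Applying Lemma \ref{lemma1}(2) to the given sequence yields $Gpd(P) \leq \max\{Gpd(M), Gpd(M)\} = Gpd(M)$, with equality under the condition $Gpd(M) \neq Gpd(M) + 1$, which holds because $Gpd(M)$ is finite. Hence $pd(P) = Gpd(M) =: k$. Finally, $Gpd(M) = k$ implies $Ext^{k+1}(M, Q) = 0$ for every projective $Q$ via \cite[Theorem 2.20]{Holm}, so the same sequence $0 \to M \to P \to M \to 0$ now witnesses that $M$ is strongly $k$-Gorenstein projective.

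The main technical point is the Horseshoe construction in part (1): one has to combine the two copies of the partial projective resolution carefully to extract both the short exact sequence $0 \to N \to K \to N \to 0$ and the projectivity of $K$ from the hypothesis $pd(P) \leq n$. Everything else is a direct assembly of Theorem \ref{caracterization of SG}, Lemma \ref{lemma1}, and standard facts about Gorenstein projective dimension.
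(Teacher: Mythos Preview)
Your proof is correct and follows essentially the same approach as the paper's own argument: for part~(1) the paper likewise applies the Horseshoe construction to the sequence $0\to M\to P\to M\to 0$ against two copies of the given resolution, obtains the short exact sequence $0\to N\to Q\to N\to 0$ with $Q$ projective because $pd(P)\le n$, and uses the dimension shift $Ext(N,K)=Ext^{n+1}(M,K)=0$ to invoke Theorem~\ref{caracterization of SG}(1); for part~(2) it uses exactly your combination of \cite[Proposition~2.27]{Holm}, Lemma~\ref{lemma1}(2) (with the finiteness of $Gpd(M)$ from part~(1) to force equality), and \cite[Theorem~2.20]{Holm}. The only cosmetic difference is that you spell out the $Ext$-vanishing argument for the projectivity of the Horseshoe kernel, whereas the paper states it in one line.
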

\begin{proof}
$(1)$ If $n=0$ the result holds from \cite[Proposition 2.9]{Bennis
and Mahdou1}. Otherwise, since $M$ is strongly $n$-Gorenstein
projective module, there is a short exact sequence $$0\rightarrow M
\rightarrow P \rightarrow M \rightarrow 0$$ where $pd(P)\leq n$.
Consider the following $n$-step projective resolution of $M$:
$$0\rightarrow N \rightarrow P_n\rightarrow ...
\rightarrow P_1 \rightarrow M \rightarrow 0$$ Hence, there is a
module $Q$ such that  the following diagram is commutative:
$$\begin{array}{ccccccccc}
   & 0 &  & 0 &  & 0 &  & 0 &  \\
  & \downarrow &  & \downarrow &  & \downarrow &  & \downarrow &  \\
  0\rightarrow  & N & \rightarrow & P_n & \rightarrow ...\rightarrow & P_1 &\rightarrow& M& \rightarrow 0 \\
   & \downarrow &  & \downarrow &  & \downarrow &  & \downarrow &  \\
    0\rightarrow  & Q & \rightarrow & P_n\oplus P_n & \rightarrow ...\rightarrow & P_1\oplus P_1 &\rightarrow& P& \rightarrow 0 \\
  & \downarrow &  & \downarrow &  & \downarrow &  & \downarrow &  \\
  0\rightarrow  & N & \rightarrow & P_n & \rightarrow ...\rightarrow & P_1 &\rightarrow& M& \rightarrow 0 \\
  & \downarrow &  & \downarrow &  & \downarrow &  & \downarrow &  \\
 & 0 &  & 0 &  & 0 &  & 0 &  \\
\end{array}$$
Clearly, $Q$ is projective since $pd(P)\leq n$ and  for every
projective module $K$, $Ext(N,K)=Ext^{n+1}(M,K)=0$.  Thus, by
\cite[Proposition 2.9]{Bennis and Mahdou1}, $N$ is strongly
Gorenstein projective module (then, Gorenstein projective). So,
$Gpd(M)\leq n$.\\

$(2)$ From the short exact sequence $0\longrightarrow M \rightarrow
P \rightarrow M \rightarrow 0$ and \cite[Proposition 2.27]{Holm} and
Lemma \ref{lemma1} and since $Gpd(M)$ is finite by $(1)$ above, we
have
$$k:=pd(P)=Gpd(P)= max\{Gpd(M),Gpd(M)\}=Gpd(M)$$
Thus, $Gpd(M)=pd(P)$. By \cite[Theorem 2.20]{Holm},
$Ext^{k+1}(M,K)=0$ whenever $K$ is projective. Consequently, $M$ is
strongly $k$-Gorenstein projective module.
 \end{proof}
Using \cite[Theorem 2.20]{Holm}, a direct consequence of Proposition
\ref{n-stron flat implis n-Gorenstein}, is that every strongly
$n$-Gorenstein projective module is strongly $m$-Gorenstein module
whenever $n\leq m$.\\

\begin{proposition}\label{sum and product}\
\begin{enumerate}
  \item If $(M_i)_{i\in I}$ is a family of strongly $n$-Gorenstein projective modules, then $\bigoplus M_i$ is strongly $n$-Gorenstein
projective.
  \item If $(M_i)_{i\in I}$ is a family of strongly $n$-Gorenstein injective modules, then $\prod M_i$ is strongly $n$-Gorenstein
injective.
\end{enumerate}
\end{proposition}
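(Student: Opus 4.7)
The plan is to take direct sums (resp.\ direct products) of the data witnessing each $M_i$'s strong $n$-Gorenstein projectivity (resp.\ injectivity), and verify that the resulting data still qualifies as a witness for $\bigoplus_i M_i$ (resp.\ $\prod_i M_i$). The argument is purely structural and rests on the exactness of $\bigoplus$ and $\prod$ in the category of $R$-modules together with the standard behaviour of $\Ext$ under these operations.

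For part (1), by hypothesis each $M_i$ sits in a short exact sequence $0\to M_i\to P_i\to M_i\to 0$ with $\pd(P_i)\leq n$ and $\Ext^{n+1}(M_i,Q)=0$ for every projective $Q$. Applying $\bigoplus_i$ preserves exactness, so I get $0\to\bigoplus_i M_i\to\bigoplus_i P_i\to\bigoplus_i M_i\to 0$. Two checks remain. First, $\pd(\bigoplus_i P_i)\leq n$: take for each $P_i$ a projective resolution of length $\leq n$, sum them termwise, and use that a direct sum of projectives is projective. Second, for every projective $Q$, the natural isomorphism $\Ext^{n+1}(\bigoplus_i M_i,Q)\cong\prod_i\Ext^{n+1}(M_i,Q)$ reduces the vanishing to the hypothesis on each $M_i$.

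Part (2) is formally dual. Products are exact on the module category, so applying $\prod_i$ to the sequences $0\to M_i\to I_i\to M_i\to 0$ yields the exact sequence $0\to\prod_i M_i\to\prod_i I_i\to\prod_i M_i\to 0$. For the injective dimension bound $\id(\prod_i I_i)\leq n$, take an injective resolution of length $\leq n$ of each $I_i$ and form the termwise product; a product of injectives is injective, so this is an injective resolution of $\prod_i I_i$ of length $\leq n$. For the $\Ext$ vanishing, use the natural isomorphism $\Ext^{n+1}(E,\prod_i M_i)\cong\prod_i\Ext^{n+1}(E,M_i)=0$ for any injective $E$.

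There is no real obstacle in this proof; the only points worth flagging are the two $\Ext$-commutation identities $\Ext^{k}(\bigoplus_i N_i,-)\cong\prod_i\Ext^{k}(N_i,-)$ and $\Ext^{k}(-,\prod_i N_i)\cong\prod_i\Ext^{k}(-,N_i)$, both of which are classical and follow from the corresponding identities for $\Hom$ combined with the exactness properties of $\bigoplus$ and $\prod$ used to construct the resolutions.
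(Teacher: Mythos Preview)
Your proof is correct and follows essentially the same approach as the paper's: both arguments combine the facts $\pd(\bigoplus P_i)=\sup\{\pd(P_i)\}$, $\id(\prod I_i)=\sup\{\id(I_i)\}$, and the standard commutation of $\Ext$ with direct sums in the first variable and direct products in the second. If anything, your version is slightly more precise, since you record the correct identity $\Ext^{k}(\bigoplus_i N_i,-)\cong\prod_i\Ext^{k}(N_i,-)$ rather than the $\bigoplus$ on the right (which is what the paper writes, though the distinction is immaterial here since each factor vanishes).
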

\begin{proof}
Clear since $pd(\oplus  M_i)=sup\{pd(M_i)\}$ and $id(\prod
M_i)=sup\{id(M_i)\}$  and also since $Ext^i(\bigoplus M_i, N)\cong
\bigoplus Ext^i(M_i,N)$ and $Ext^i(M,\prod N_i)\cong \prod
Ext^i(M,N_i)$ for every modules $M,N,M_i,N_i$ and all $i\geq 0$.
\end{proof}

It is clear that, for a positive integer $n$ and an $R$-module $M$:
$$``pd(M)\leq n"\Longrightarrow ``M\; is\; strongly\; n-Gorenstein\;
projective" \Longrightarrow ``Gpd(M)\leq n"$$

The converse is false as the following two examples shows:

\begin{example}\label{M sgp dont implies proje}
Consider the quasi-Frobenius local ring $R:=K[X]/(X^2)$ where $K$
is a field and we  denote by $\overline{X}$  the residue class in
$R$ of $X$. Let $S$ be a Noetherian ring such $gldim(S)=n$.
Consider a finitely generated $S$-module $M$ of $S$ such that
$pd_{S}(M) =n$. Set $T =R \times S$ and set $E :=(\overline{X})
\times M$. Then:
\begin{enumerate}
  \item $E$ is strongly $n$-Gorenstein projective $T$-module and
$Gpd_{T}(E) =n$.
  \item However,
  $pd_{T}(E) =\infty .$
\end{enumerate}
\end{example}
\begin{proof}
$(1)$  Consider the short exact sequence of $R$-modules:
$$0\rightarrow (\overline{X})\stackrel{\kappa}\rightarrow
R\stackrel{\phi}\rightarrow (\overline{X})\rightarrow 0$$ where
$\kappa$ is the injection and  $\phi$ is the multiplication by
$\overline{X}$. And consider also the short exact sequence of
$S$-module:
$$0\rightarrow M \stackrel{\iota}\rightarrow M\oplus M\stackrel{\pi}\rightarrow M \rightarrow 0$$
where $\iota$ and $\pi$ are respectively the canonical injection and
projection. Hence, we have the short exact sequence of $R\times
S$-module: $$(\star)\qquad 0\rightarrow E\rightarrow R\times
(M\oplus M)\rightarrow E\rightarrow 0$$ By \cite[Lemma 2.5(2)]{Costa
Mahdou}, $pd_T(R\times (M\oplus M))=pd_S(M\oplus M)=n$. On the other
hand, by \cite[Theorem 3.1]{Bennis and Mahdou3} and
\cite[Propositions 2.8 and 2.12]{Bennis and Mahdou2}, we have
$$G.gldim(T)=sup\{G.gldim(R),G.gldim(S)\}=gldim(S)=n<\infty$$ Then,
$Gpd_T(E)<\infty$. Therefore, applying  Lemma \ref{lemma1} to
$(\star)$, $$Gpd_T(E)\leq max\{Gpd_T(R\times (M\oplus
M)),Gpd_T(E)-1\}$$ Thus, $Gpd_T(E)\leq Gpd_T(R\times (M\oplus M))$.
Using Lemma \ref{lemma1} again to $(\star)$, we have $$Gpd_T(R\times
(M\oplus M))\leq max\{Gpd_T(E),Gpd_T(E)\}=Gpd_T(E)$$ So,
$Gpd_T(E)=Gpd_T(R\times (M\oplus M))$. On the other hand, by
\cite[Propostion 2.27]{Holm}, $Gpd_T(R\times (M\oplus
M))=pd_T(R\times (M\oplus M))=n$. Consequently, $Gpd_T(E)=n$ and by
$(\star)$ and \cite[Theorem 2.20]{Holm}, $E$ is strongly
$n$-Gorenstein projective $T$-module, as desired.\\

$(2)$ Using \cite[Lemma 2.5(2)]{Costa Mahdou},
$pd_T(E)=sup\{pd_R(\overline{X}),pd_S(M)\}$. Now, suppose that
$pd_R(\overline{X})<\infty$. Thus, by \cite[Proposition 2.8 and
Corollary 2.10]{Bennis and Mahdou2}, $\overline{X}$ is projective
 and then free since $R$ is
  local. Absurd, since $\overline{X}^2=0$. Consequently,
  $pd_T(E)=\infty$.

\end{proof}

\begin{example}\label{Gp don't implis SGp}
Consider the Noetherian local ring $R:=K[[X,Y]]/(XY)$ where $K$ is
a field, and we denote by $\overline{X}$  the residue class in $R$
of $X$. Let $S$ be a Noetherian ring such that $gldim(S)=n$. Let
$M$ be a finitely generated  $S$-module such that $pd_{S}(M) =n$.
Set $T =R \times S$ and set $E :=(\overline{X}) \times M$. Then:
\begin{enumerate}
  \item $Gpd_{T}(E)=n$.
  \item There is no positive
  integer $k$ for such  $E$ is  strongly $k$-Gorenstein
  $T$-module.
\end{enumerate}
\end{example}
\begin{proof}
$(1)$ By \cite[Lemma 3.2]{Bennis and Mahdou3} and \cite[Theorem
2.27]{Holm},
  $$n=pd_S(M)=Gpd_S(M)=Gp_S(E\otimes_T S)\leq Gpd_T(E)$$
  On the other hand, seen \cite[Propostions 2.8 and 2.10 and 2.12 ]{Bennis and
  Mahdou2}, the conditions of \cite[Lemma 3.3]{Bennis and Mahdou3}
  are satisfied. Hence, we have
  $$Gpd_T(E)\leq sup\{Gpd_R(\overline{X}),Gpd_S(M)\}=pd_S(M)=n$$
  Consequently, $Gpd_T(E)=n$, as desired.\\

$(2)$ Suppose the existence of a positive integer $k$ such that $E$
is strongly $k$-Gorenstein projective $T$-module. Then, there exist
a short exact sequence of $T$-modules $0\rightarrow E \rightarrow P
\rightarrow E \rightarrow 0$ where $pd_T(P)<\infty$. Since $R$ is a
projective $T$-module and since $(\overline{X})\cong_RE\otimes_TR$
we have a short exact sequence of $R$-modules
$$0\rightarrow (\overline{X})\rightarrow P\otimes_TR\rightarrow (\overline{X})\rightarrow
0$$ Notice that $pd_R(P\otimes_TR)<\infty$ since $R$ is a projective
$T$-module. Using \cite[Propositions 2.8 and 2.10]{Bennis and
Mahdou2}, we get that $P\otimes_TR$ is a projective $R$-module and
that $(\overline{X})$ is a Gorenstein projective $R$-module. So, by
\cite[Theorem 2.20]{Holm}, $(\overline{X})$ is strongly Gorenstein
projective module. Absurd  (by \cite[Example 2.13(2)]{Bennis and
Mahdou1}).
\end{proof}

Now we give our main result of this paper.

\begin{theorem}\label{direct summand SGP}
Let $M$ be an $R$-module and $n$ a positive  integer. Then,
$Gpd_R(M)\leq n$ if, and only if, $M$ is a direct summand of a
strongly $n$-Gorenstein projective module.
\end{theorem}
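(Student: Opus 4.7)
The reverse implication is immediate: if $M$ is a direct summand of a strongly $n$-Gorenstein projective module $H$, then $Gpd_R(H) \leq n$ by Proposition \ref{n-stron implis n-Gorenstein}(1), and since Gorenstein projective dimension does not increase under passage to a direct summand, $Gpd_R(M) \leq n$.

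For the forward direction, my plan is to assume $Gpd_R(M) \leq n$ and construct a strongly $n$-Gorenstein projective module $H$ that contains $M$ as a direct summand, by gluing three exact sequences end-to-end into one exact complex with a single non-projective term and then taking $H$ to be the direct sum of all its syzygies. First, I would apply Holm's characterization of finite Gorenstein projective dimension \cite[Theorem 2.10]{Holm} to obtain a short exact sequence
\[
0 \to M \to L \to G \to 0
\]
with $pd_R(L) \leq n$ and $G$ Gorenstein projective. Next, I would fix a projective resolution $\cdots \to P_1 \to P_0 \to M \to 0$ of $M$, and, using that Gorenstein projective modules admit short exact sequences $0 \to G_i \to Q^i \to G_{i+1} \to 0$ with $Q^i$ projective and $G_{i+1}$ Gorenstein projective, build a projective co-resolution $0 \to G \to Q^0 \to Q^1 \to \cdots$ of $G$.

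Splicing the three pieces along the composites $P_0 \twoheadrightarrow M \hookrightarrow L$ and $L \twoheadrightarrow G \hookrightarrow Q^0$ yields the complex
\[
\mathcal{S}\colon \cdots \to P_1 \to P_0 \to L \to Q^0 \to Q^1 \to \cdots
\]
whose only non-projective term is $L$. Writing $T^i$ for its terms and $Z^i := \ker(T^i \to T^{i+1})$ for its kernels, one has $Z^0 = M$; the remaining $Z^i$ are either ordinary syzygies of $M$ (so $Gpd_R(Z^i) \leq n$) or cosyzygies of $G$ (so Gorenstein projective). I would set $H := \bigoplus_{i \in \mathbb{Z}} Z^i$, then sum the short exact sequences $0 \to Z^i \to T^i \to Z^{i+1} \to 0$ and reindex to obtain
\[
0 \to H \to T \to H \to 0,
\]
where $T = L \oplus (\text{projective})$, so that $pd_R(T) = pd_R(L) \leq n$. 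Since $Gpd_R(Z^i) \leq n$ for every $i$, Holm's Ext-vanishing characterization \cite[Theorem 2.20]{Holm} gives $\Ext^{n+1}(Z^i, Q) = 0$ for every projective $Q$, whence $\Ext^{n+1}(H, Q) \cong \prod_i \Ext^{n+1}(Z^i, Q) = 0$. This would show that $H$ is strongly $n$-Gorenstein projective with $M = Z^0$ as a direct summand.

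The main technical point is verifying exactness of $\mathcal{S}$ at the three gluing positions $P_0$, $L$, and $Q^0$: for instance at $L$, the image of $P_0 \to L$ is the copy of $M$ sitting inside $L$ via $\iota$, and the kernel of $L \to Q^0$ is $\ker(L \to G) = M$ (since $G \hookrightarrow Q^0$ is injective); the checks at $P_0$ and $Q^0$ are analogous. These are the only non-routine verifications in the construction, everything else being a formal manipulation of direct sums and Ext.
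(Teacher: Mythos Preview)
Your argument is correct and follows essentially the same route as the paper: obtain a short exact sequence $0\to M\to L\to G\to 0$ with $pd(L)\le n$ and $G$ Gorenstein projective, extend by a projective resolution of $M$ on the left and a projective coresolution of $G$ on the right, and take the direct sum of all syzygies to produce the required strongly $n$-Gorenstein projective module. The only point worth flagging is your invocation of \cite[Theorem~2.10]{Holm}: as used in the paper, that result yields the sequence $0\to K\to G\to M\to 0$ with $pd(K)\le n-1$, and the paper then produces your sequence $0\to M\to L\to G\to 0$ by a pushout along $0\to G\to P\to G^0\to 0$; you should either include this pushout step or cite a source that states the coresolution form directly.
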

\begin{proof}
 If $n=0$ the result holds from \cite[Theorem 2.7]{Bennis and
Mahdou1}. So, assume that  $0<Gpd(M)\leq n$. From, \cite[Theorem
2.10]{Holm}, there is an exact sequence of $R$-module $0\rightarrow
K \rightarrow G \rightarrow M$  where $G$ is Gorenstein  projective
and $pd(K)\leq n-1$. By definition of Gorenstein projective module
there is a short exact sequence
$$0\rightarrow G \rightarrow P \rightarrow G^0\rightarrow 0$$
 where $P$ is projective
and $G'$ is Gorenstein projective. Hence, consider the following
pushout diagram:
$$\begin{array}{ccccccc}
   & 0 &  & 0 &  &  &  \\
   & \downarrow &  & \downarrow &  & &  \\
   & K& = & K &  &  &  \\
   & \downarrow &  & \downarrow &  &  &  \\
  0\rightarrow & G & \rightarrow & P  & \rightarrow & G^0 & \rightarrow 0 \\
   & \downarrow &  & \downarrow &  & \parallel &  \\
  0\rightarrow & M & \rightarrow & D & \rightarrow  & G^0 & \rightarrow 0 \\
 & \downarrow &  & \downarrow &  & &  \\
 & 0 &  & 0 &  &  &  \\
\end{array}$$
From the vertical middle short exact sequence, $pd(D)\leq
pd(K)+1\leq n$. Now, consider the Gorenstein projective resolution
of $M$:
$$0\rightarrow G_n\rightarrow P_n\rightarrow ... \rightarrow P_1
\rightarrow M \rightarrow 0$$ where all $P_i$ are projective and
$G_n$ is Gorenstein projective. Devise this sequence on short exact
sequence as

$$\begin{array}{ccccccc}
                   0\rightarrow & G_1 & \rightarrow & P_1 & \rightarrow & M & \rightarrow 0\\
                  0\rightarrow & G_2 & \rightarrow & P_2 & \rightarrow & G_1 & \rightarrow 0 \\
                 & \vdots &  &  \vdots & & \vdots & \\
                   0\rightarrow & G_n & \rightarrow &P_n & \rightarrow   & G_{n-1}&\rightarrow 0 \\
                 \end{array}$$

 Clearly, by Lemma \ref{lemma1}, for all $1\leq i\leq n$,
$Gpd(G_i)\leq n-i \leq n$. \\
Consider also the following projective resolution of $G_n$:
$$...\rightarrow P_{n+2}\rightarrow P_{n+1} \rightarrow G_n
\rightarrow 0$$ and devise this long sequence on short exact
sequences as $0\rightarrow G_{i+1}\rightarrow P_{i+1} \rightarrow
G_i\rightarrow 0$ for all $i\geq n$. It is clear that for all $i\geq
n$, $G_i$ is
Gorenstein projective module (by \cite[Theorem 2.5]{Holm}).\\
 On the other hand, since $G^0$ is
Gorenstein projective, there a co-proper right projective resolution
of $G^0$
$$0\rightarrow G^0 \rightarrow P^1\rightarrow P^2 \rightarrow P^3
\rightarrow ...$$ such that for  every $i\geq 1$,
$G^i=Im(P^i\rightarrow P^{i+1})$ is Gorenstein projective. If we
devise this sequence on short exact sequence we get  $0\rightarrow
G^i\rightarrow
P^{i+1} \rightarrow G^{i+1} \rightarrow 0$ for all $i\geq 0$.\\
Briefly, we have

$$\begin{array}{ccccccc}
                    & \vdots &  &  \vdots & & \vdots &  \\
                   0\rightarrow & G^1 & \rightarrow & P^2 & \rightarrow & G^2 & \rightarrow 0\\
                  0\rightarrow & G^0 & \rightarrow & P^1 & \rightarrow & G^1 & \rightarrow 0 \\
                   0\rightarrow  &M & \rightarrow & D & \rightarrow & G^0 &  \rightarrow 0 \\
                   0\rightarrow & G_1 & \rightarrow &P_1 & \rightarrow   & M&\rightarrow 0 \\
                    0\rightarrow & G_2 & \rightarrow &P_2 & \rightarrow   & G_1&\rightarrow 0 \\
            & \vdots &  &  \vdots & & \vdots &
                 \end{array}$$

Thus, we have a sum short exact sequence $0\rightarrow N \rightarrow
Q \rightarrow N \rightarrow 0$ where $N=\oplus_{i\geq 1}G_i\oplus M
\oplus_{i\geq 0}G^i$ and $Q=\oplus_{i\geq 1}P_i\oplus D\oplus_{i\geq
1} P^i$. And clearly $pd(Q)=pd(D)\leq n$ and
$Gpd(N)=sup\{Gpd(G_i),Gpd(G^i), Gpd(M)\}\leq n$ (by
\cite[Proposition 2.19]{Holm}). Thus, by \cite[Theorem 2.20]{Holm},
$N$ is strongly $n$-Gorenstein projective module and $M$ is a
direct summand of $N$.\\

 The condition ``if" follows from
\cite[Propostion 2.19]{Holm} and Proposition \ref{n-stron implis
n-Gorenstein}.
\end{proof}

Dually, we have:

\begin{theorem}\label{direct summand SGI}
Let $M$ be an $R$-module and $n$ a positive integer. Then,
$Gid_R(M)\leq n$ if, and only if, $M$ is a direct summand of a
strongly $n$-Gorenstein injective module.
\end{theorem}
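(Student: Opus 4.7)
The argument dualizes the proof of Theorem \ref{direct summand SGP} step by step: Gorenstein projective becomes Gorenstein injective, projective becomes injective, the pushout construction becomes a pullback, and direct sums become direct products. The ``if'' direction will follow immediately from the injective analogue of Proposition \ref{n-stron implis n-Gorenstein} together with \cite[Proposition 2.19]{Holm} applied dually.

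For the ``only if'' direction, the case $n=0$ is \cite[Theorem 2.7]{Bennis and Mahdou1}. Assume $0<Gid_R(M)\leq n$. The plan is to first invoke the dual of \cite[Theorem 2.10]{Holm} to produce a short exact sequence $0\to M\to G\to K\to 0$ with $G$ Gorenstein injective and $id_R(K)\leq n-1$. Since $G$ is Gorenstein injective there is a short exact sequence $0\to G_0\to I\to G\to 0$ with $I$ injective and $G_0$ Gorenstein injective. Taking the pullback of $I\twoheadrightarrow G$ along $M\hookrightarrow G$ yields the following commutative diagram with exact rows and columns:
$$\begin{array}{ccccccc}
 & 0 &  & 0 &  &  &  \\
 & \downarrow &  & \downarrow &  & &  \\
 & G_0 & = & G_0 &  &  &  \\
 & \downarrow &  & \downarrow &  &  &  \\
0\rightarrow & D & \rightarrow & I  & \rightarrow & K & \rightarrow 0 \\
 & \downarrow &  & \downarrow &  & \parallel &  \\
0\rightarrow & M & \rightarrow & G & \rightarrow  & K & \rightarrow 0 \\
 & \downarrow &  & \downarrow &  & &  \\
 & 0 &  & 0 &  &  &
\end{array}$$
The middle row forces $id_R(D)\leq n$, and the left column is a short exact sequence $0\to G_0\to D\to M\to 0$.

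Next I would construct a Gorenstein injective coresolution $0\to M\to I^1\to\cdots\to I^n\to G^n\to 0$ with each $I^j$ injective and $G^n$ Gorenstein injective (using the dual of \cite[Theorem 2.22]{Holm}), and extend $G^n$ to the right by a full injective coresolution whose successive cosyzygies $G^{n+1},G^{n+2},\dots$ remain Gorenstein injective. Dually, since $G_0$ is Gorenstein injective it admits a co-proper left injective resolution $\cdots\to I_2\to I_1\to G_0\to 0$ whose successive syzygies are Gorenstein injective. Splitting everything into short exact sequences and taking the termwise direct product of all of them (including the pullback sequence $0\to G_0\to D\to M\to 0$) produces a single short exact sequence $0\to N\to Q\to N\to 0$, where $N$ contains $M$ as a direct summand and $Q$ is a product of injective modules and one copy of $D$. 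Because $id_R\!\left(\prod M_i\right)=\sup\{id_R(M_i)\}$, we obtain $id_R(Q)\leq n$. Verifying $Ext^{n+1}(E,N)=0$ for every injective $E$ (via the product formula for $Ext$ together with the dual of \cite[Theorem 2.22]{Holm} applied to each factor) shows that $N$ is strongly $n$-Gorenstein injective, as required.

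The main point to check carefully will be the indexing in the final product, exactly as in the proof of Theorem \ref{direct summand SGP}: one needs the left-hand and right-hand products of the listed short exact sequences to coincide so that the assembled sequence has the form $0\to N\to Q\to N\to 0$. The other delicate point is that one must use direct products rather than direct sums, because injective modules are closed under products over arbitrary rings; this is compatible with Proposition \ref{sum and product}(2), which already records that strong $n$-Gorenstein injectivity is preserved under products.
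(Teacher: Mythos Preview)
Your proposal is correct and is exactly the approach the paper takes: the paper's own proof is a one-line instruction to dualize Theorem~\ref{direct summand SGP}, replacing direct sums by direct products and invoking \cite[Theorem 2.15]{Holm}, the dual of \cite[Proposition 2.19]{Holm}, and \cite[Theorem 2.7]{Bennis and Mahdou1}. Your write-up simply spells out that dualization in detail (pullback instead of pushout, injective coresolutions instead of projective resolutions, products instead of sums); one cosmetic point is that what you call ``the dual of \cite[Theorem 2.10]{Holm}'' and ``the dual of \cite[Theorem 2.22]{Holm}'' are already stated in Holm as \cite[Theorem 2.15]{Holm} and \cite[Theorem 2.22]{Holm} respectively, so no further dualization of those references is needed.
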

\begin{proof}
The proof is similar to the one of Theorem \ref{direct summand SGP}
by replacing the direct sum by the direct product and by using
\cite[Theorem 2.15]{Holm} and the dual of \cite[Propostion
2.19]{Holm} and \cite[Theorem 2.7]{Bennis and Mahdou1}.
\end{proof}
\begin{remark}
From the proof of Theorem \ref{direct summand SGP}, if $Gpd(M)=n$
then, there exists a strongly $n$-Gorenstein projective module $N$
such that $Gpd(N)=n$ and $M$ is a direct summand of $N$.
\end{remark}
\begin{proposition}\label{carac SGP}
For any module M and any positive integer $n$, the following are
equivalent:
\begin{enumerate}
  \item $M$ is strongly $n$-Gorenstein projective.
  \item There is an exact sequence $0\rightarrow M \rightarrow Q
  \rightarrow M \rightarrow 0$ where $pd(Q)\leq n$ and $Ext^i(M,P)=0$
  for every  module $P$ with finite projective dimension and all $i>n$.
  \item There is an exact sequence $0\rightarrow M \rightarrow Q
  \rightarrow M \rightarrow 0$ where $pd(Q)<\infty$ and $Ext^i(M,P)=0$
  for every projective module $P$ and all $i>n$.
\end{enumerate}
\end{proposition}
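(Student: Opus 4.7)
My plan is to establish the cycle $(1)\Rightarrow (2)\Rightarrow (3)\Rightarrow (1)$. The middle implication is immediate, since $pd(Q)\leq n$ forces $pd(Q)<\infty$ and every projective module has finite projective dimension, so the conditions of (2) are a strict strengthening of those of (3). The two substantive implications both pivot on Holm's characterization of finite Gorenstein projective dimension via Ext-vanishing against projectives \cite[Theorem 2.20]{Holm}, whose hypothesis requires finite $Gpd$ at the outset; the main bookkeeping in the whole argument is establishing this finiteness before each invocation of Holm's theorem.

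For $(1)\Rightarrow (2)$, I would first invoke Proposition \ref{n-stron implis n-Gorenstein}(1) to obtain $Gpd(M)\leq n$ directly from the hypothesis that $M$ is strongly $n$-Gorenstein projective. Holm's theorem then promotes the Ext-vanishing at level $n+1$ against projectives into Ext-vanishing at every level $i>n$ against every module of finite projective dimension --- this is exactly the Ext clause of (2). The short exact sequence needed in (2) is the one already supplied by the definition in (1).

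For $(3)\Rightarrow (1)$, the key step is to prove that the middle term $Q$ of the given sequence actually satisfies $pd(Q)\leq n$, even though the hypothesis only asserts $pd(Q)<\infty$. Applying $Hom(-,K)$ to $0\rightarrow M\rightarrow Q\rightarrow M\rightarrow 0$ for each projective $K$ and reading off the resulting long exact sequence yields $Ext^i(Q,K)=0$ for every $i>n$, since both flanking terms $Ext^i(M,K)$ vanish by the hypothesis of (3). Next, \cite[Proposition 2.27]{Holm} upgrades $pd(Q)<\infty$ into $Gpd(Q)=pd(Q)<\infty$, so that Holm's Theorem 2.20, now applicable to $Q$, forces $Gpd(Q)\leq n$ and hence $pd(Q)\leq n$. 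The required clause $Ext^{n+1}(M,K)=0$ for projective $K$ is the $i=n+1$ instance of the Ext condition in (3), so $M$ satisfies the definition of strongly $n$-Gorenstein projective using the very sequence provided. The only delicate point, as mentioned above, is ensuring Holm's theorem is used only when the finite-$Gpd$ hypothesis is already in hand; here it is supplied by \cite[Proposition 2.27]{Holm} applied to $Q$.
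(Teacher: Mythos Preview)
Your proof is correct and follows essentially the same route as the paper's: the same cycle $(1)\Rightarrow(2)\Rightarrow(3)\Rightarrow(1)$, the same use of Proposition~\ref{n-stron implis n-Gorenstein} together with \cite[Theorem~2.20]{Holm} for $(1)\Rightarrow(2)$, and the same long-exact-sequence argument combined with \cite[Proposition~2.27 and Theorem~2.20]{Holm} to force $pd(Q)\leq n$ in $(3)\Rightarrow(1)$. Your explicit attention to verifying the finite-$Gpd$ hypothesis before each invocation of Holm's theorem is a welcome clarification that the paper leaves implicit.
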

\begin{proof}\

$1\Rightarrow 2.$ By definition of strongly $n$-Gorenstein
projective modules, we have just to prove that for every $i>n$ and
all module $P$ with finite projective dimension we have
$Ext^i(M,P)=0$. That is clear from \cite[Theorem 2.20]{Holm} since
$Gpd(M)\leq n$ (by Proposition \ref{n-stron implis n-Gorenstein}).\\
$2\Rightarrow 3.$ Obvious.\\
$3\Rightarrow 1.$ Since $Ext^i(M,P)=0$
  for every projective module $P$ and all $i>n$, from the short exact sequence
$0\rightarrow M \rightarrow Q \rightarrow M \rightarrow 0$  we have
for all $i>n$
$$...\rightarrow 0=Ext^i(M,P)\rightarrow Ext^i(Q,P)\rightarrow
  Ext^i(Q,P)=0\rightarrow  ...$$
  Thus $Ext^i(Q,P)=0$. On the other hand, $Gpd(Q)=pd(Q)<\infty$ (by \cite[Proposition 2.27]{Holm}).
  Then, from \cite[Theorem 2.20]{Holm}, $pd(Q)=Gpd(Q)\leq n$.
  Consequently, $M$ is strongly $n$-Gorenstein projective.
\end{proof}
\begin{proposition}\label{G.gldim<}
If $G.gldim(R)<\infty$. Then:
\begin{enumerate}
  \item $M$ is strongly $n$-Gorenstein projective if, and only if,
  there exists an exact sequence $0\rightarrow M \rightarrow Q
  \rightarrow M \rightarrow 0$ where $pd(Q)\leq n$.
  \item $M$ is strongly $n$-Gorenstein injective if, and only if,
  there exists an exact sequence $0\rightarrow M \rightarrow E
  \rightarrow M \rightarrow 0$ where $id(E)\leq n$.
\end{enumerate}
\end{proposition}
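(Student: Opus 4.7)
The ``only if'' direction in both parts is immediate from the definitions of strongly $n$-Gorenstein projective and strongly $n$-Gorenstein injective modules, so no work is needed there. The substance of the proposition is the converse: one must deduce the Ext-vanishing condition built into the definition from the mere existence of the short exact sequence, and the hypothesis $G.gldim(R)<\infty$ is what enables this deduction.

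For part (1), suppose we are given $0\to M\to Q\to M\to 0$ with $pd(Q)\leq n$. The plan is first to show that $Gpd_R(M)\leq n$ and then use Holm's characterization of the Gorenstein projective dimension to conclude the required $Ext^{n+1}$-vanishing. Since $G.gldim(R)<\infty$, the invariant $Gpd_R(M)$ is finite. By \cite[Proposition 2.27]{Holm}, $Gpd_R(Q)=pd_R(Q)\leq n$. Apply Lemma \ref{lemma1}(1) to the given sequence: we obtain
$$Gpd_R(M)\leq \max\{Gpd_R(Q),\,Gpd_R(M)-1\}.$$
If $Gpd_R(M)>Gpd_R(Q)$, this reduces to $Gpd_R(M)\leq Gpd_R(M)-1$, which is absurd because $Gpd_R(M)$ is finite. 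Hence $Gpd_R(M)\leq Gpd_R(Q)\leq n$. With this bound in hand, \cite[Theorem 2.20]{Holm} gives $Ext^{n+1}(M,P)=0$ for every projective module $P$, so the definition of strongly $n$-Gorenstein projective is verified (one may also wrap up by invoking Proposition \ref{carac SGP} directly, once the Ext-vanishing for all $i>n$ is known).

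Part (2) is proved dually: given $0\to M\to E\to M\to 0$ with $id_R(E)\leq n$, the finiteness of $G.gldim(R)$ (which by the results cited in the introduction coincides with $GID(R)$) ensures $Gid_R(M)<\infty$; the dual of \cite[Proposition 2.27]{Holm} yields $Gid_R(E)=id_R(E)\leq n$; and Lemma \ref{injective version}(3) applied to the sequence forces $Gid_R(M)\leq Gid_R(E)\leq n$. The dual of \cite[Theorem 2.20]{Holm} then delivers $Ext^{n+1}(E',M)=0$ for every injective $E'$, finishing the proof.

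No step presents a genuine obstacle: the only subtle point is recognizing that the hypothesis $G.gldim(R)<\infty$ is used exactly once, namely to guarantee that $Gpd_R(M)$ (respectively $Gid_R(M)$) is finite, which is what makes the contradiction in the application of Lemma \ref{lemma1}(1) (respectively Lemma \ref{injective version}(3)) go through. Without this finiteness the short exact sequence $0\to M\to Q\to M\to 0$ alone does not force the Ext-vanishing, as Example \ref{Gp don't implis SGp} already illustrates.
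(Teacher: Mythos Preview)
Your proof is correct, but it proceeds along a slightly different line from the paper's own argument. The paper works directly with Ext groups: from the short exact sequence $0\to M\to Q\to M\to 0$ and the vanishing $Ext^i(Q,P)=0$ for $i>n$ (since $pd(Q)\leq n$), the long exact sequence in $Ext(-,P)$ gives isomorphisms $Ext^{n+1}(M,P)\cong Ext^{n+2}(M,P)\cong\cdots$; then the finiteness of $Gpd(M)$ forces these groups to vanish eventually, hence $Ext^{n+1}(M,P)=0$. You instead package the argument at the level of dimensions: you bound $Gpd(M)\leq n$ via Lemma~\ref{lemma1}(1) (using finiteness of $Gpd(M)$ to rule out the degenerate branch of the max), and then read off the Ext-vanishing from \cite[Theorem 2.20]{Holm}. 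Both routes hinge on exactly the same finiteness input, and yours is arguably cleaner since it reuses the paper's own Lemma~\ref{lemma1}; the paper's direct Ext-shifting argument, on the other hand, is more self-contained and makes the mechanism visible without appealing to the packaged inequality. One small remark: what you call ``the dual of \cite[Theorem 2.20]{Holm}'' in part (2) is precisely \cite[Theorem 2.22]{Holm}, which the paper already cites elsewhere, so you may as well name it.
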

\begin{proof}
$(1)$ The condition ``only if" is clear by definition of the
strongly $n$-Gorenstein projective module. So, we claim the ``if"
condition. Since $G.gldime(R)<\infty$, $Gpd(M)<\infty$. Thus, there
is an integer $k$ such that $Ext^i(M,P)=0$ for all $i>k$ and for all
projective module $P$. Thus, using the long exact sequence
$$...\rightarrow Ext^i(Q,P)\rightarrow Ext^i(M,P)\rightarrow
  Ext^{i+1}(M,P)\rightarrow Ext^{i+1}(Q,P)\rightarrow ...$$ we deduce that
  $Ext^{n+1}(M,P)=Ext^{n+j}(M,P)$ for all $j>0$ (since $pd(Q)\leq n$).
  Thus, if $j>k$. we conclude that for every projective module $P$,
  $Ext^{n+1}(M,P)=0$. Consequently, $M$ is strongly $n$-Gorenstein
  projective.\\

$(2)$ The proof is dual to $(1)$.
\end{proof}

\begin{proposition}\label{short sequence like holm}
Let  $M$ be a  strongly $n$-Gorenstein projective $R$-module ($n\geq
1$). Then, there is  an epimorphism  $\varphi:N\twoheadrightarrow M$
  where $N$ is strongly Gorenstein projective and
$K = Ker(\varphi)$ satisfies $pd(K)=Gpd(M)-1\leq  n - 1$.

\end{proposition}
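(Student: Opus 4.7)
The plan is to combine \cite[Theorem 2.10]{Holm} with a totalisation built from the defining SnGP sequence of $M$. Set $m := \gpd_R(M)$; by Proposition~\ref{n-stron implis n-Gorenstein} we have $m \le n$. If $m = 0$, part~(2) of that proposition forces $\pd_R(P) = 0$ in the defining sequence $0 \to M \to P \to M \to 0$, so $P$ is projective and $M$ itself is strongly Gorenstein projective; take $N = M$ and $K = 0$. Assume henceforth $m \ge 1$. By \cite[Theorem 2.10]{Holm} there is a short exact sequence $0 \to K_0 \to G \to M \to 0$ with $G$ Gorenstein projective and $\pd_R(K_0) = m - 1$; the kernel already has the correct projective dimension, so the remaining task is to replace $G$ by a strongly Gorenstein projective module without worsening the kernel.

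The naive upgrade, using \cite[Theorem 2.7]{Bennis and Mahdou1} to express $G$ as a direct summand of some SGP module $G \oplus L$, fails because the Gorenstein projective complement $L$ generically has infinite projective dimension. My approach is to build $N$ directly from the defining sequence of $M$. Fix a projective resolution $0 \to Q_m \to Q_{m-1} \to \cdots \to Q_0 \to P \to 0$ of $P$ and lift the self-map $f := i \circ p : P \to P$ (which satisfies $f^2 = 0$ and $\img f = \ker f = M$) to a chain endomorphism $\tilde f$ of $Q_\bullet$. Since $\tilde f^2$ is null-homotopic, choose a homotopy $h_0$, and for $m \ge 2$ iteratively produce higher coherences. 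Assembling $\tilde f$, $h_0$, the higher homotopies, and the $Q$-differentials into a degree-$(-1)$ endomorphism $D$ of the projective module $T := \bigoplus_{i=0}^{m} Q_i$ with $D^2 = 0$, arranged so that the $1$-periodic complex $\cdots \to T \xrightarrow{D} T \xrightarrow{D} T \to \cdots$ is exact, set $N := \ker D = \img D$. Then $0 \to N \to T \to N \to 0$ is exact with $T$ projective, and the natural composite $T \twoheadrightarrow Q_0 \to P \to M$ restricts to an epimorphism $\varphi: N \to M$ whose kernel $K$ can be identified with the $(m-1)$-st syzygy of $P$ in its resolution, giving $\pd_R(K) = m - 1$.

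To conclude $N$ is strongly Gorenstein projective, note that the SES $0 \to N \to T \to N \to 0$ with projective $T$ yields the periodic isomorphism $\Ext^i_R(N, Q') \cong \Ext^{i+1}_R(N, Q')$ for every $i \ge 1$ and projective $Q'$; since $\gpd_R(N)$ is finite (bounded by Lemma~\ref{lemma1} applied to $0 \to K \to N \to M \to 0$), $\Ext^i_R(N, Q')$ vanishes for $i$ sufficiently large, and hence for all $i \ge 1$ by the periodicity. Thus $N$ satisfies the characterisation of strongly Gorenstein projective modules recalled in Theorem~\ref{caracterization of SG}. Finally, the lower bound $\pd_R(K) \ge \gpd_R(M) - 1$ follows from Lemma~\ref{lemma1}(3) applied to $0 \to K \to N \to M \to 0$ together with $\gpd_R(N) = 0$, giving the equality $\pd_R(K) = \gpd_R(M) - 1$. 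The principal technical obstacle is the construction of $D$ with $D^2 = 0$ for $m \ge 2$: one must iteratively choose higher homotopies cancelling the obstructions from $\tilde f^k \not\equiv 0$, reflecting an $A_\infty$-type structure. An alternative inductive route applies the $m = 1$ case of the construction to the pullback $M \times_P Q_0$ after an auxiliary verification that this pullback is strongly $(m-1)$-Gorenstein projective.
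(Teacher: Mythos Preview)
Your proposal leaves its central technical step—the construction of the endomorphism $D$ of $T=\bigoplus_i Q_i$ with $D^2=0$—explicitly unresolved, and the identification of $\ker\varphi$ with ``the $(m-1)$-st syzygy of $P$'' is asserted without argument. The perturbation can in principle be completed (each obstruction is a chain map of positive degree on a bounded complex of projectives inducing zero on homology, hence null-homotopic), but you have not carried this out, and even the alternative inductive route you sketch at the end requires a verification you flag but do not supply.

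More to the point, the machinery is unnecessary. The paper's argument rests on a fact you cite but do not fully exploit: part~(1) of Proposition~\ref{n-stron implis n-Gorenstein} says that in \emph{any} $n$-step projective resolution
\[
0 \to N \to P_n \to \cdots \to P_1 \to M \to 0
\]
the syzygy $N$ is already \emph{strongly} Gorenstein projective, not merely Gorenstein projective. There is thus no need to invoke \cite[Theorem~2.10]{Holm} and then struggle to upgrade the resulting $G$; the SGP module is available from the start, sitting at the far end of the resolution. Splicing $n$ copies of the defining sequence $0\to N\to Q\to N\to 0$ gives a second length-$n$ resolution $0\to N\to Q\to\cdots\to Q\to N\to 0$ with $Q$ projective and $\Hom(-,P')$ leaving it exact for every projective $P'$. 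The comparison lemma produces a chain map to the original resolution over $\mathrm{id}_N$, and its mapping cone yields an epimorphism $\varphi\colon P_1\oplus N\twoheadrightarrow M$ with $\pd(\ker\varphi)\le n-1$; this is precisely Holm's mapping-cone argument with ``Gorenstein projective'' replaced throughout by ``strongly Gorenstein projective''. The equality $\pd(K)=\gpd(M)-1$ then follows from Lemma~\ref{lemma1} together with \cite[Proposition~2.27]{Holm}, exactly as in your final paragraph.
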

\begin{proof} Assume that $M$ is  strongly $n$-Gorenstein projective
module.\\

 The proof will be  similar to the one of \cite[Theorem
2.10]{Holm}. For exactness we give the proof here. Let $0\rightarrow
N \rightarrow
  P_n\rightarrow ...\rightarrow P_1 \rightarrow M \rightarrow 0$ be
  an exact sequence where all $P_i$ are projective and  $N$ is strongly Gorenstein
  projective (the existence of this sequence is guaranties by Proposition \ref{n-stron implis n-Gorenstein}). By definition of strongly Gorenstein projective module,  there is an exact sequence
  $0\rightarrow N \rightarrow Q\rightarrow ...\rightarrow
  Q\rightarrow N \rightarrow 0$ where $Q$ is projective
  and such that the functor $Hom(-,P)$ leaves this sequence exact,
  whenever $P$ is projective. Thus, there exists homomorphisms,
  $Q\rightarrow P_{i}$ for $i=1,...,n$ and $N\rightarrow M$,
  such that the following diagram is commutative.
  $$\begin{array}{ccccccccc}
    0\rightarrow & N & \rightarrow & Q &\rightarrow ... \rightarrow& Q & \rightarrow & N & \rightarrow 0 \\
     & \parallel &  & \downarrow &    & \downarrow &  & \downarrow &  \\
    0\rightarrow & N & \rightarrow & P_n & \rightarrow ...\rightarrow& P_1 & \rightarrow & M &
    \rightarrow 0
  \end{array}$$
This diagram gives a chain map between complexes,
$$\begin{array}{ccccccc}
    0\rightarrow  & Q &\rightarrow ... \rightarrow& Q & \rightarrow & N & \rightarrow 0 \\
     & \downarrow &    & \downarrow &  & \downarrow &  \\
    0\rightarrow  & P_n & \rightarrow ...\rightarrow& P_1 & \rightarrow & M &
    \rightarrow 0
  \end{array}$$
which induces an isomorphism in homology. Its mapping cone is exact,
and all the modules in it, except for $P_1\oplus N$, which is
strongly  Gorenstein projective, are projective. Hence the kernel
$K$ of $\varphi: P_1\oplus N\twoheadrightarrow M$ satisfies
$pd(K)\leq n-1$, as desired.
 \end{proof}

\begin{proposition}\label{suite exacte}\
\begin{description}
  \item[(A)]
Let $0\rightarrow N\stackrel{\alpha}\rightarrow P
\stackrel{\beta}\rightarrow
  N'\rightarrow 0$ be an exact sequence of $R$-modules.
\begin{description}
  \item[Case1 ``P projective and $Gpd(N')=n<\infty$"]\
  \begin{enumerate}
    \item If $N'$ is strongly Gorenstein projective then so is $N$.
    \item if $n\geq 1$ and $N'$ is strongly $n$-Gorenstein
    projective then $N$ is strongly $(n-1)$-Gorenstein projective and $Gpd(N)=n-1$.
  \end{enumerate}
  \item[Case 2 ``$pd(P)=n<\infty$"]\

  If $N$ is strongly Gorenstein projective module then $N'$ is
  strongly $(n+1)$-Gorenstein projective.
\end{description}

  \item[(B)]Let $0\rightarrow N\stackrel{\mu}\rightarrow N'\stackrel{\nu}\rightarrow Q \rightarrow 0$ be an
exact sequence where $pd(Q)=n<\infty$.
\begin{enumerate}
  \item If $n>0$ and $N'$ is strongly Gorenstein projective then $N$ is strongly
  $(n-1)$-Gorenstein projective.
  \item If $Q$ is projective then $N$ is strongly Gorenstein
  projective if, and only if, $N'$ is strongly Gorenstein
  projective.
  \end{enumerate}
\end{description}

\end{proposition}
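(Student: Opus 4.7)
The plan is to prove each sub-claim by combining three ingredients: a Gorenstein dimension bound via Lemma \ref{lemma1}, vanishing of a critical $\Ext$ group via the long exact sequence on the given short exact sequence, and an explicit construction of the required self-extension by a pullback or pushout. I propose to prove Part (B2) first, since its ``$\Leftarrow$'' direction is the main technical step and is used in Part (A) Case 1.

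For Part (B2) ``$\Rightarrow$'', the hypothesis that $Q$ is projective splits the given sequence, so $N'\cong N\oplus Q$, and Proposition \ref{sum and product} applied to the strongly Gorenstein projective modules $N$ and $Q$ yields that $N'$ is strongly Gorenstein projective. For ``$\Leftarrow$'', from $N'\cong N\oplus Q$ the defining SGP sequence reads $0\to N\oplus Q\stackrel{\alpha}{\to} P^*\stackrel{\beta}{\to} N\oplus Q\to 0$ with $P^*$ projective. Writing $\alpha_1\colon N\to P^*$ for the restriction of $\alpha$ to the first summand, I compute that $P^*/\alpha_1(N)$ fits in a short exact sequence $0\to Q^2\to P^*/\alpha_1(N)\to N\to 0$. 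This sequence splits because $N$ is Gorenstein projective as a direct summand of the Gorenstein projective $N'$, and $Q^2$ is projective, so $\Ext^1(N,Q^2)=0$; hence $P^*/\alpha_1(N)\cong N\oplus Q^2$. Lifting the inclusion $Q^2\hookrightarrow N\oplus Q^2\cong P^*/\alpha_1(N)$ to $s\colon Q^2\to P^*$ (possible since $Q^2$ is projective), and using the composition $P^*\twoheadrightarrow N\oplus Q^2\twoheadrightarrow Q^2$ as a retraction of $s$, I split off $s(Q^2)$ as a projective direct summand: $P^*\cong s(Q^2)\oplus P^{**}$ with $P^{**}$ projective. Quotienting the sequence $0\to N\to P^*\to N\oplus Q^2\to 0$ by $s(Q^2)$ then produces the desired self-extension $0\to N\to P^{**}\to N\to 0$, which together with $\Ext^1(N,L)=0$ (inherited from the SGP condition on $N'$) yields $N$ strongly Gorenstein projective via Theorem \ref{caracterization of SG}.

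For Part (A) Case 1(1), the vanishing $\Ext^1(N,L)=0$ for projective $L$ follows from the LES on $0\to N\to P\to N'\to 0$ together with the fact that $N'$ is Gorenstein projective. I form the pullback $W=Q\times_{N'}P$ of the SGP sequence $0\to N'\to Q\to N'\to 0$ of $N'$ along $P\twoheadrightarrow N'$; one SES $0\to N'\to W\to P\to 0$ splits, giving $W\cong N'\oplus P$, while the other reads $0\to N\to N'\oplus P\to Q\to 0$ with $N'\oplus P$ strongly Gorenstein projective (by Proposition \ref{sum and product}) and $Q$ projective, so Part (B2) delivers $N$ SGP. For Case 1(2), Lemma \ref{lemma1}(1) applied with $\gpd(P)=0\neq n=\gpd(N')$ yields $\gpd(N)=n-1$; the LES gives $\Ext^n(N,L)\cong\Ext^{n+1}(N',L)=0$; and an analogous pullback yields $0\to N\to N'\oplus P\to Q\to 0$ with $\pd(Q)\leq n$ and middle strongly $n$-GP, to which an adaptation of the (B2) ``$\Leftarrow$'' argument (with a module of $\pd\leq n$ in place of the projective $Q$, the Ext-splitting $\Ext^1(N,Q^2)=0$ holding because $\gpd(N)=n-1$ and $\pd(Q^2)\leq n$) produces the required self-extension for $N$.

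For Part (A) Case 2, Lemma \ref{lemma1}(3) gives $\gpd(N')\leq n+1$ and the LES gives $\Ext^{n+2}(N',L)\cong\Ext^{n+1}(N,L)=0$ for projective $L$ (using that $N$ is Gorenstein projective). To construct the self-extension for $N'$, I push out the SGP sequence $0\to N\to P^*\to N\to 0$ of $N$ along $N\hookrightarrow P$; this produces $X$ fitting in $0\to P^*\to X\to N'\to 0$ and $0\to P\to X\to N\to 0$, and a further pullback/quotient step, analogous to the (B2) argument applied in the dual direction, yields the desired SES $0\to N'\to Q'\to N'\to 0$ with $\pd(Q')\leq n+1$. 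For Part (B1), $\gpd(N)=n-1$ follows from Lemma \ref{lemma1}; the vanishing $\Ext^i(N,L)=0$ for projective $L$ and $i\geq n$ follows from the LES (using $N'$ SGP and $\pd(Q)=n$); and the required self-extension is obtained by combining the SGP sequence of $N'$ with the inclusion $\mu\colon N\hookrightarrow N'$ via the quotient construction $P^*/\alpha(\mu(N))$, paralleling the (B2) argument in the higher-dimensional setting to produce a middle of $\pd\leq n-1$. The main technical obstacle throughout is the (B2) ``$\Leftarrow$'' construction; once it is established the remaining cases follow by analogous quotient-and-split arguments adapted to higher dimensions.
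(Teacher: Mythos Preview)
Your argument for (B2) ``$\Leftarrow$'' is correct, and routing (A) Case 1(1) through a pullback followed by (B2) is a valid alternative to the paper's direct diagram. However, your adaptation to (A) Case 1(2) and (B1) contains a genuine gap: you assert that $\Ext^1(N,Q^2)=0$ follows from $\gpd(N)=n-1$ together with $\pd(Q^2)\leq n$, but \cite[Theorem~2.20]{Holm} only gives $\Ext^i(N,L)=0$ for $i\geq n$ when $L$ has finite projective dimension. For $n\geq 2$ there is no reason for $\Ext^1(N,Q^2)$ to vanish, so the splitting step---and with it your quotient-and-split construction of the self-extension of $N$---breaks down. The same objection applies to your (B1) sketch, and your outline for (A) Case~2 is too schematic: the pushout $X$ you form does not by itself produce a short exact sequence with $N'$ at both ends, and the ``further pullback/quotient step'' you allude to would again require a low-degree $\Ext$-vanishing that is not available.

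The paper avoids this difficulty by never needing an $\Ext^1$-splitting in the higher cases. For (A) Case 1(2) it builds a $3\times 3$ commutative diagram with exact rows and columns by lifting the strongly $n$-GP sequence $0\to N'\to Q\to N'\to 0$ over the surjection $P\twoheadrightarrow N'$ (a Horseshoe-type construction using only the projectivity of $P$); the left column is then the required sequence $0\to N\to Q'\to N\to 0$, and the middle row $0\to Q'\to P\oplus P\to Q\to 0$ forces $\pd(Q')=n-1$. For (A) Case~2 and for (B), the paper instead uses the SGP hypothesis on $N$ (resp.\ $N'$) to lift the map $\alpha\colon N\to P$ (resp.\ $\nu\colon N'\to Q$) through the projective middle of the SGP sequence, obtaining a map $\phi$ into $P\oplus P$ (resp.\ $Q\oplus Q$); one application of the Snake Lemma then yields the self-extension of $N'$ (resp.\ $N$) as the cokernel row (resp.\ kernel row), with the projective dimension of the new middle term read off from $0\to Q\to P\oplus P\to (P\oplus P)/\phi(Q)\to 0$ (resp.\ $0\to\Ker\phi\to P\to Q\oplus Q\to 0$). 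This direct Snake-Lemma construction is what you should aim for; it replaces your splitting step entirely.
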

\begin{proof} $\textbf{(A)}$
\begin{description}
  \item[Case 1] \

  $(1)$ Clear.\\

  $(2)$ If $N'$ is strongly $n$-Gorenstein projective module, there
  is a short exact sequence $0\rightarrow N' \rightarrow Q \rightarrow
  N'\rightarrow 0$ where $pd(Q)\leq n$. Since $Gpd(N')=n$ we deduce
  that $pd(Q)=n$ (by Proposition \ref{n-stron implis n-Gorenstein}). On the  other hand, we have the following
  commutative diagram:
  $$\begin{array}{ccccccc}
     & 0 &  & 0 &  & 0 &  \\
     & \downarrow &  & \downarrow &  & \downarrow &  \\
    0\rightarrow & N & \rightarrow & P & \rightarrow & N' & \rightarrow 0 \\
    & \downarrow &  & \downarrow &  & \downarrow &  \\
     0\rightarrow & Q' & \rightarrow & P\oplus P & \rightarrow & Q & \rightarrow 0 \\
   & \downarrow &  & \downarrow &  & \downarrow &  \\
    0\rightarrow & N & \rightarrow & P & \rightarrow & N' & \rightarrow 0 \\
   & \downarrow &  & \downarrow &  & \downarrow &  \\
    & 0 &  & 0 &  & 0 &  \\
  \end{array}$$
  Since $P$ is projective, we get  $pd(Q')=n-1$ and since $Gpd(N')=n$
  we deduce that $Gpd(N)=n-1$ (by Lemma \ref{lemma1}). Thus, $N'$ is strongly
  $(n-1)$-Gorenstein projective (by \cite[Theorem 2.20]{Holm}).

  \item[Case 2] \

Since $N$ is strongly Gorenstein projective module there is an exact
sequence $0\rightarrow N\stackrel{u}\rightarrow Q
\stackrel{v}\rightarrow N \rightarrow 0$ where $Q$ is projective and
$Ext(N,K)=0$ for every module $K$ with finite projective dimension.
Thus, since $pd(P)<\infty$, the short sequence
$$0\rightarrow Hom(N,P)\stackrel{\circ v}\rightarrow
Hom(Q,P)\stackrel{\circ u}\rightarrow Hom(N,P)\rightarrow 0$$ is
exact. Hence, for $\alpha: N\rightarrow P$ there is a morphism
$\lambda:Q\rightarrow P$ such that $\alpha=\lambda\circ u$. Thus,
the following diagram is commutative $$\begin{array}{ccccccc}
  0\rightarrow & N & \stackrel{u}\rightarrow & Q & \stackrel{v}\rightarrow & N  & \rightarrow 0 \\
  & \alpha \downarrow &  &  \phi \downarrow&  &  \downarrow \alpha &  \\
 0\rightarrow &P& \stackrel{i}\rightarrow & P\oplus P & \stackrel{j}\rightarrow & P & \rightarrow 0 \\
\end{array}$$
where $\phi:Q\rightarrow P\oplus P$ is defined by
$\phi(q)=(\lambda(q),\alpha\circ v(q))$ and $i$ and $j$ are
respectively the canonical injection and projection. Thus, applying
the Snake Lemma, we deduce an exact sequence of the from:
$$0\rightarrow N'\rightarrow (P\oplus P)/\phi(Q)\rightarrow
N'\rightarrow 0$$ and clearly  $pd(P\oplus P/\phi(Q))\leq n+1$ and
$Gpd(N')\leq n+1$. Thus, by \cite[Theorem 2.20]{Holm}, $N'$ is
strongly (n+1)-Gorenstein projective, as desired.
\end{description}

$\textbf{(B)}$

Suppose that $N'$ is strongly Gorenstein
  projective. Thus,  there is an exact
sequence $0\rightarrow N'\stackrel{u}\rightarrow P
\stackrel{u}\rightarrow N' \rightarrow 0$ where $P$ is projective
and $Ext(N,K)=0$ for every module $K$ with finite projective
dimension. Then, similar as in $\textbf{(A) Case 2}$, there is a
morphism $\phi:P\rightarrow Q\oplus Q$ such that the following
diagram is commutative:
$$\begin{array}{ccccccc}
  0\rightarrow & N' & \stackrel{u}\rightarrow & P & \stackrel{v}\rightarrow & N'  & \rightarrow 0 \\
  & \nu \downarrow &  &  \phi \downarrow&  &  \downarrow \nu &  \\
 0\rightarrow &Q& \stackrel{i}\rightarrow & Q\oplus Q & \stackrel{j}\rightarrow & Q & \rightarrow 0 \\
\end{array}$$ Hence, applying Snake Lemma, we  get an exact  sequence of the
form  $0\rightarrow N\rightarrow Ker(\phi) \rightarrow N \rightarrow
0$.
\begin{enumerate}
  \item If $n>0$ then $pd(ker(\phi))= n-1$ and also $Gpd(N)=n-1$
  (by Lemma \ref{lemma1}). Therefore, $N$ is strongly
  $(n-1)$-Gorenstein projective (by \cite[Theorem 2.20]{Holm}).
  \item If $Q$ is projective, then $ker(\phi)$ is projective and $N$
  is Gorenstein projective. Thus, $N$ is strongly Gorenstein
  projective. Conversely, if $N$ is strongly Gorenstein projective
  it is clear that $N'\cong N\oplus P$ is strongly Gorenstein
  projective, as desired.
\end{enumerate}
\end{proof}

Dually, we have:

\begin{proposition}\label{suite exacte2}\
\begin{description}
  \item[(A)]
Let $0\rightarrow N\stackrel{\alpha}\rightarrow I
\stackrel{\beta}\rightarrow
  N'\leftrightarrow 0$ be an exact sequence of $R$-modules.
\begin{description}
  \item[Case1 ``I injective  and $Gid(N)=n<\infty$"]\
  \begin{enumerate}
    \item If $N$ is strongly Gorenstein injective then so is $N'$.
    \item If $n\geq 1$ and $N$ is strongly $n$-Gorenstein
   injective  then $N'$ is strongly $(n-1)$-Gorenstein injective  and $Gid(N')=n-1$.
  \end{enumerate}
  \item[Case 2 ``$id(P)=n<\infty$"]\

  If $N'$ is strongly Gorenstein injective module then $N$ is
  strongly $(n+1)$-Gorenstein injective.
\end{description}

  \item[(B)]Let $0\rightarrow E\stackrel{\mu}\rightarrow N'\stackrel{\nu}\rightarrow N \rightarrow 0$ be an
exact sequence where $id(E)=n<\infty$.
\begin{enumerate}
  \item If $n>0$ and $N'$ is strongly Gorenstein injective  then $N$ is strongly
  $(n-1)$-Gorenstein injective.
  \item If $E$ is injective then $N$ is strongly Gorenstein
  injective  if, and only if, $N'$ is strongly Gorenstein
  injective.
  \end{enumerate}
\end{description}

\end{proposition}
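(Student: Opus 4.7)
The plan is to dualize the proof of Proposition \ref{suite exacte} step by step: projective modules are replaced by injectives, $\pd$ by $\id$, $\gpd$ by $\gid$, the functor $\Hom(-,K)$ by $\Hom(K,-)$, pushouts by pullbacks, and cokernels of chain maps by kernels. Lemma \ref{injective version} plays the role of Lemma \ref{lemma1}, and the injective characterization \cite[Theorem 2.22]{Holm} of modules of finite Gorenstein injective dimension replaces \cite[Theorem 2.20]{Holm}.

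For (A) Case 1, the natural $3\times 3$ commutative diagram combining the hypothesis sequence $0\to N\to I\to N'\to 0$ with a strongly Gorenstein (resp.\ strongly $n$-Gorenstein) injective structure $0\to N\to J\to N\to 0$ produces a middle row $0\to J\to I\oplus I\to Q\to 0$ and a third column $0\to N'\to Q\to N'\to 0$. In case (1), $J$ is injective, so $Q$ is injective and strong Gorenstein injectivity of $N'$ follows. In case (2), $\id(J)\leq n$ and $I$ is injective, and Lemma \ref{injective version} together with the hypothesis $\gid(N)=n$ forces $\gid(N')=n-1$ and $\id(Q)\leq n-1$, so $N'$ is strongly $(n-1)$-Gorenstein injective by the injective analogue of \cite[Theorem 2.20]{Holm}.

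For (A) Case 2 and part (B) the key step is a dualized Snake-Lemma construction. In (A) Case 2, given the strongly Gorenstein injective sequence $0\to N'\to J\xrightarrow{v} N'\to 0$ and $\id(I)=n<\infty$, Gorenstein injectivity of $N'$ yields $\Ext^{1}(I,N')=0$, so $\beta:I\to N'$ lifts along $v$ to some $\lambda:I\to J$. The resulting chain map from the split sequence $0\to I\to I\oplus I\to I\to 0$ down to $0\to N'\to J\to N'\to 0$, with middle component $(x,y)\mapsto u\beta(x)+\lambda(y)$, produces by the Snake Lemma a short exact sequence $0\to N\to\ker(\phi)\to N\to 0$ with $\id(\ker(\phi))\leq n+1$. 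Part (B) follows the same template with $E$ in place of $I$: the chain map now goes from the split sequence on $E$ down into the strongly Gorenstein injective sequence of $N'$, and the Snake Lemma yields $0\to N\to\coker(\phi)\to N\to 0$ with $\id(\coker(\phi))\leq n-1$; case (B)(2) reduces to the splitting of $0\to E\to N'\to N\to 0$ (when $E$ is injective) together with Proposition \ref{sum and product}.

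The main obstacle is purely bookkeeping: ensuring that in each dualized diagram the hypothesis sequence sits in the correct row, that $\ker$ rather than $\coker$ appears in the right place, and that the bounds produced by Lemma \ref{injective version} match the targets $n-1$, $n$, or $n+1$. The equality clause of Lemma \ref{injective version} is needed in (A) Case 1(2) and (B)(1), where the hypotheses $\gid(N)=n$ and $\id(E)=n$ respectively rule out off-by-one slippage.
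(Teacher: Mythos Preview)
Your proposal is correct and matches the paper's intent: the paper gives no explicit proof of Proposition~\ref{suite exacte2}, merely prefacing it with ``Dually, we have,'' so the intended argument is precisely the step-by-step dualization of Proposition~\ref{suite exacte} that you outline. Your identification of the replacements (Lemma~\ref{injective version} for Lemma~\ref{lemma1}, \cite[Theorem~2.22]{Holm} for \cite[Theorem~2.20]{Holm}, co-Horseshoe diagrams and kernels/cokernels swapped) is exactly right, and the Snake-Lemma constructions in (A) Case~2 and (B) are the correct duals. One small sharpening: in (A) Case~2 the sequence $0\to\ker(\phi)\to I\oplus I\to J\to 0$ actually splits (since $J$ is injective), so $\id(\ker(\phi))\leq n$ rather than $n+1$; this does not affect your conclusion.
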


\begin{corollary}
Let $R$ be a ring. The following are equivalent:
\begin{enumerate}
  \item Every Gorenstein projective module is strongly Gorenstein
  projective.
  \item Every module such $Gpd(M)\leq 1$ is strongly 1-Gorenstein
  projective.
\end{enumerate}
\end{corollary}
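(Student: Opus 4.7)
The plan is to verify the two directions separately; each is a short syzygy-style argument using the toolkit already developed in this section, and no new construction is required.

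For $(2)\Rightarrow(1)$, let $G$ be any Gorenstein projective $R$-module. Then $Gpd(G)=0\leq 1$, so hypothesis $(2)$ supplies a short exact sequence $0\to G\to P\to G\to 0$ with $pd(P)\leq 1<\infty$. I would then invoke Proposition~\ref{n-stron implis n-Gorenstein}$(2)$, which forces $pd(P)=Gpd(G)=0$. Hence $P$ is projective and the same short exact sequence exhibits $G$ as strongly Gorenstein projective.

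For $(1)\Rightarrow(2)$, let $M$ satisfy $Gpd(M)\leq 1$ and pick any short exact sequence $0\to L\to P\to M\to 0$ with $P$ projective (e.g., from a free cover of $M$). Lemma~\ref{lemma1}$(1)$ gives
$$Gpd(L)\leq\max\{Gpd(P),Gpd(M)-1\}\leq 0,$$
so $L$ is Gorenstein projective. Hypothesis $(1)$ then upgrades $L$ to a strongly Gorenstein projective module, and Case~2 of Proposition~\ref{suite exacte}$(A)$, applied with $n=0$ to the sequence $0\to L\to P\to M\to 0$, concludes that $M$ is strongly $1$-Gorenstein projective.

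The only conceptual hurdle is to locate the right two tools in the section: Case~2 of Proposition~\ref{suite exacte}$(A)$ provides the upgrade from ``strongly GP syzygy through a projective'' to ``strongly $1$-GP'' needed for $(1)\Rightarrow(2)$, while the equality $Gpd(M)=pd(P)$ in Proposition~\ref{n-stron implis n-Gorenstein}$(2)$ is precisely what makes $(2)\Rightarrow(1)$ collapse once one notes that $Gpd(G)=0$. Once these are identified, both directions are immediate.
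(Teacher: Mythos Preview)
Your proof is correct and follows essentially the same route as the paper. For $(1)\Rightarrow(2)$ the arguments are identical; for $(2)\Rightarrow(1)$ the paper observes directly that $Q$ is Gorenstein projective of finite projective dimension and hence projective (via \cite[Theorem 2.5 and Proposition 2.27]{Holm}), while you instead invoke Proposition~\ref{n-stron implis n-Gorenstein}(2) to force $pd(P)=Gpd(G)=0$, which is just a packaged form of the same computation.
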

\begin{proof}
Assume that every Gorenstein projective module is strongly
Gorenstein projective and consider a module $M$ such that
$Gpd(M)\leq 1$. Consider a short exact sequence $0\rightarrow N
\rightarrow P \rightarrow M \rightarrow 0$ where $P$ is projective
and so $N$ is Gorenstein projective. Hence, by the hypothesis
condition, $N$ is strongly Gorenstein projective module. Thus, by
Proposition \ref{suite exacte} (Case 2), $M$ is strongly
$1$-Gorenstein projective module, as desired.\\
Conversely, assume that every module such $Gpd(M)\leq 1$ is strongly
$1$-Gorenstein projective. Let $M$ be a Gorenstein projective
module. Thus, by the hypothesis condition $M$ is strongly
$1$-Gorenstein projective. Then, there is an exact sequence
$0\rightarrow M \rightarrow Q \rightarrow M \rightarrow 0$ where
$pd(Q)\leq 1$. Since $M$ is Gorenstein projective, so is $Q$ and
then it is projective (by \cite[Theorem 2.5 and Proposition
2.27]{Holm}). Consequently, $M$ is strongly Gorenstein projective
module.
\end{proof}
\begin{proposition}
Let $R$ be a ring. The following are equivalent:
\begin{enumerate}
  \item Every module is strongly $n$-Gorenstein projective.
  \item Every module is strongly $n$-Gorenstein injective.
\end{enumerate}
\end{proposition}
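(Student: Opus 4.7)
By the evident symmetry between the two statements, it suffices to prove $(1) \Rightarrow (2)$; the converse then follows by interchanging projective with injective modules, and direct sums with direct products, throughout.

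Assume $(1)$. By Proposition \ref{n-stron implis n-Gorenstein}, $Gpd_R(M) \leq n$ for every module $M$, so $GPD(R) \leq n$. The identity $GPD(R) = GID(R)$ recalled in the introduction then forces $G.gldim(R) \leq n < \infty$, and in particular $Gid_R(M) \leq n$ for every $M$. Proposition \ref{G.gldim<}(2) now reduces the task of showing that a given module $M$ is strongly $n$-Gorenstein injective to exhibiting a short exact sequence $0 \rightarrow M \rightarrow E \rightarrow M \rightarrow 0$ with $id(E) \leq n$.

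A useful symmetric consequence of $(1)$ is that every injective module $J$ satisfies $pd(J) \leq n$: indeed, applying $(1)$ to $J$ produces $0 \rightarrow J \rightarrow P \rightarrow J \rightarrow 0$ with $pd(P) \leq n$, and this sequence splits by injectivity of $J$, so $P \cong J \oplus J$ and hence $pd(J) \leq n$. Combined with the bound $Gid_R(M) \leq n$ valid for all $M$, this symmetric dimension control is precisely what should allow one to dualise the sequence furnished by $(1)$ into an extension of $M$ by itself whose middle term has bounded injective dimension.

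The crux --- and what I expect to be the main obstacle --- is the explicit construction of such $E$. A natural strategy is to mimic the diagrammatic arguments of Theorem \ref{direct summand SGP} and Proposition \ref{suite exacte2}, pasting the sequence $0 \rightarrow M \rightarrow P_M \rightarrow M \rightarrow 0$ supplied by $(1)$ to an injective envelope of $M$ via a pushout, and then splicing or iterating so that the resulting middle term has injective dimension at most $n$ (and not merely finite injective dimension). It is this diagrammatic step, rather than any step of purely homological Ext bookkeeping, that will be the technical heart of the proof; the reverse implication $(2) \Rightarrow (1)$ is then obtained by the symmetric construction, using Proposition \ref{G.gldim<}(1) and the fact that every projective module is forced by $(2)$ to have $id \leq n$.
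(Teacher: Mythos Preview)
Your setup is correct: you establish $G.gldim(R) \leq n$ and reduce via Proposition~\ref{G.gldim<}(2) to producing, for each $M$, a short exact sequence $0 \to M \to E \to M \to 0$ with $id(E) \leq n$. But the proof then stops short: the ``crux'' you identify is not carried out, only a vague pushout/splicing strategy is sketched. This is a genuine gap, and moreover you have misidentified where the difficulty lies --- there is in fact no diagrammatic obstacle at all.

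No new construction is needed. The sequence $0 \to M \to P \to M \to 0$ already supplied by hypothesis~(1), with $pd(P) \leq n$, is itself the desired sequence: one only has to observe that $id(P) \leq n$ as well. This follows immediately from $G.gldim(R) \leq n$: for any module $N$ and any $k > n$ we have $Gpd(N) \leq n$, so by \cite[Theorem~2.20]{Holm} $Ext^k(N,P) = 0$ whenever $pd(P) < \infty$; hence $id(P) \leq n$. (This is precisely \cite[Corollary~2.10]{Bennis and Mahdou2}, which the paper invokes directly.) Thus $E = P$ works, and the proof is complete in one line. Your observation that every injective has $pd \leq n$ is the dual of this same fact and is exactly what makes the symmetric implication $(2) \Rightarrow (1)$ go through --- you proved the version needed for the converse but never applied the forward version where it was actually required.
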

\begin{proof}
We prove only one implication and the other is similar.\\
Assume that every module is strongly $n$-Gorenstein projective. Thus
$G.gldim(R)\leq n$ (by Proposition \ref{n-stron implis n-Gorenstein}
and the hypothesis condition). Now,  consider an arbitrary module
$M$. Clearly $Gid(M)\leq n$ (since $G.gldim(R)\leq n$). Then, for
every injective module $I$, $Ext^{n+1}(I,M)=0$ (\cite[Theorem
2.22]{Holm}). On the other hand, there is an exact sequence,
$0\rightarrow M \rightarrow P \rightarrow M \rightarrow 0$ where
$pd(P)\leq n$. By \cite[Corollary 2.10]{Bennis and Mahdou2},
$id(P)\leq n$. Consequently, $M$ is strongly $n$-Gorenstein
injective, as desired.
\end{proof}
\begin{proposition}
Let $R$ be a ring with finite Gorenstein global dimension and $n$ a
positive  integer. The following are equivalent:
\begin{enumerate}
  \item $G.gldim(R)\leq n$.
  \item Every strongly Gorenstein projective module is strongly
  $n$-Gorenstein injective module.
  \item Every strongly Gorenstein injective module is strongly
  $n$-Gorenstein projective module.
\end{enumerate}
\end{proposition}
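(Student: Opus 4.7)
The plan is to establish the cyclic chain $(1)\Rightarrow(2)\Rightarrow(1)$ together with $(1)\Rightarrow(3)\Rightarrow(1)$, the second pair of implications being exactly dual to the first. Throughout, the two main tools I would rely on are, first, the fact that $G.gldim(R)\leq n$ forces $id(P)\leq n$ for every projective module $P$ and $pd(I)\leq n$ for every injective module $I$ (\cite[Corollary 2.10]{Bennis and Mahdou2}), and second, the Ext--vanishing characterizations of \cite[Theorems 2.20 and 2.22]{Holm} of finite $Gpd$ and finite $Gid$.

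For $(1)\Rightarrow(2)$, given a strongly Gorenstein projective $M$ with defining sequence $0\to M\to P\to M\to 0$ ($P$ projective), I would keep this very sequence: the first tool gives $id(P)\leq n$, and $Gid(M)\leq G.gldim(R)\leq n$ combined with Holm's theorem gives $Ext^{n+1}(E,M)=0$ for every injective $E$, showing $M$ is strongly $n$-Gorenstein injective. The implication $(1)\Rightarrow(3)$ is the exact dual (using $pd$ and $Gpd$ in place of $id$ and $Gid$).

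For $(2)\Rightarrow(1)$, the first step is to upgrade the hypothesis from strongly Gorenstein projectives to \emph{all} Gorenstein projectives: any Gorenstein projective $M$ is, by \cite[Theorem 2.7]{Bennis and Mahdou1}, a direct summand of some strongly Gorenstein projective $N$; hypothesis $(2)$ makes $N$ strongly $n$-Gorenstein injective; the injective analogue of Proposition \ref{n-stron implis n-Gorenstein}(1) gives $Gid(N)\leq n$; and Theorem \ref{direct summand SGI} transfers this bound to the summand $M$. To extend from Gorenstein projectives to arbitrary modules, I would exploit $G.gldim(R)<\infty$ to take, for any module $X$, a Gorenstein projective resolution $0\to G_k\to G_{k-1}\to\cdots\to G_0\to X\to 0$ of finite length; splitting it into the usual short exact sequences $0\to C_{i+1}\to G_i\to C_i\to 0$ and iterating Lemma \ref{injective version}(3) from the innermost piece outwards, the bound $Gid\leq n$ is preserved at every stage (since $\max\{n,n-1\}=n$), yielding $Gid(X)\leq n$ and hence $GID(R)=G.gldim(R)\leq n$.

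The implication $(3)\Rightarrow(1)$ is dual: by the same summand trick every Gorenstein injective module has $Gpd\leq n$, and then a finite Gorenstein injective coresolution of an arbitrary module $X$ (available because $Gid(X)<\infty$) combined with an iteration of Lemma \ref{lemma1}(1) yields $Gpd(X)\leq n$. The only delicate point is this inductive climb along the resolution; it works precisely because of the ``$-1$'' shift in clause $(3)$ of Lemmas \ref{lemma1} and \ref{injective version}, which is exactly what prevents the bound from inflating above $n$ at each splitting step.
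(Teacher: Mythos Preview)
Your argument is correct, and the forward implications $(1)\Rightarrow(2)$ and $(1)\Rightarrow(3)$ match the paper's proof essentially verbatim. The difference lies in the converse $(2)\Rightarrow(1)$. You first upgrade to all Gorenstein projectives via the direct-summand theorem, then propagate the bound $Gid\leq n$ along a finite Gorenstein projective resolution of an arbitrary module using Lemma~\ref{injective version}(3). The paper instead applies the hypothesis only to a \emph{projective} module $P$: the defining sequence $0\to P\to E\to P\to 0$ with $id(E)\leq n$ then \emph{splits} (because $P$ is projective), forcing $E\cong P\oplus P$ and hence $id(P)\leq n$; the conclusion $G.gldim(R)\leq n$ follows immediately from the characterization in \cite[Theorem~2.1 and Lemma~2.2]{Bennis and Mahdou2}. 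The paper's route is shorter and exploits a neat splitting trick you may have overlooked; your route, on the other hand, is more self-contained in that it derives $GID(R)\leq n$ directly without appealing to the ``$id$ of projectives'' characterization of Gorenstein global dimension.
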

\begin{proof} We claim  that  $G.gldim(R)\leq n$ if, and only if, every strongly Gorenstein projective module is strongly
  $n$-Gorenstein injective module. The proof of the other
  equivalence is analogous. So, suppose that $G.gldim(R)\leq n$ and
  consider a strongly Gorenstein projective module $M$. For such
  module there is an exact sequence $0\rightarrow M \rightarrow P
  \rightarrow M \rightarrow 0$ where $P$ is projective. From \cite[Corollary 2.10]{Bennis and
  Mahdou2}, $id(P)\leq n$. Hence, from Proposition \ref{G.gldim<},
  $M$ is strongly $n$-Gorenstein injective.\\
  Conversely, suppose that every strongly Gorenstein projective module is strongly
  $n$-Gorenstein injective module and let $P$ be a projective
  module (then strongly Gorenstein projective). By the hypothesis
  condition, $P$ is strongly $n$-Gorenstein injective. Thus, there
  is an exact sequence $0\rightarrow P \rightarrow E \rightarrow P
  \rightarrow 0$ where $id(E)\leq n$. Hence, $P\oplus P\cong E$.
  Consequently, $id(P)\leq n$. Then, from \cite[Theorem 2.1 and Lemma 2.2]{Bennis and
  Mahdou2}, $G.gldim(R)\leq n$, as desired.
\end{proof}

\end{section}

%%%%%%%%%%%%%%%%%%%%%%%%%%%%%%%%%%%%%%%%%%%%%%%%%%%%%%%%%
%%%%%%%%%%%%%%%%%%%%%%%%%%%%%%%%%%%%%%%%%%%%%%%%%%%%%%%%%
%%Section2%%%%%%%%%%%%%%%%%%%%%%%%%%%%%%%%%%%%%%%%%%%
%%%%%%%%%%%%%%%%%%%%%%%%%%%%%%%%%%%%%%%%

\begin{section}{Strongly $n$-Gorenstein flat modules}
In this section, we introduce and study the strongly $n$-Gorenstein
flat modules which are defined as follows:
\begin{definition} An $R$-module $M$ is said to be strongly $n$-Gorenstein flat,
if there exists a short exact sequence
$$0\longrightarrow M \longrightarrow F \longrightarrow M
\longrightarrow 0$$ where $fd_R(P)\leq n$ and $Tor_R^{n+1}(M,I)=0$
whenever $I$ is injective.
\end{definition}

A direct consequence of the above definition is that, the strongly
$0$-Gorenstein flat modules are just the strongly Gorenstein flat
modules (by \cite[Proposition 3.6]{Bennis and Mahdou1}). Also every
module with finite flat dimension less or equal than $n$ is a
strongly $n$-Gorenstein flat module. Also we have:

\begin{proposition}\label{n-stron flat implis n-Gorenstein} Let $n$ be a positive integer and  $M$ be a
strongly $n$-Gorenstein flat $R$-module. Then, the following hold:
\begin{enumerate}
  \item If $0\rightarrow N \rightarrow P_n\rightarrow ...
\rightarrow P_1 \rightarrow M \rightarrow 0$ is an exact sequence
where all $P_i$ are projective, then $N$ is strongly Gorenstein flat
module and consequently $Gfd(M)\leq n$.
  \item Moreover, if $0\longrightarrow M \longrightarrow F \longrightarrow M
\longrightarrow 0$ is a short exact  sequence, where $fd(F)<
\infty$,  then $Gfd(M)=fd(F)$ and consequently $M$ is strongly
$k$-Gorenstein flat module with $k:=pd(P)$.
\end{enumerate}

\end{proposition}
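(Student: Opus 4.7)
The strategy is to mirror the proof of Proposition \ref{n-stron implis n-Gorenstein} verbatim, substituting the flat versions of each homological tool: Lemma \ref{lemma3} in place of Lemma \ref{lemma1}, Theorem \ref{caracterization of SG}(3) in place of Theorem \ref{caracterization of SG}(1), Tor-vanishing against injectives in place of Ext-vanishing against projectives, and the flat analogues of \cite[Theorem 2.20]{Holm} and \cite[Proposition 2.27]{Holm} (i.e.\ \cite[Theorem 3.14]{Holm} and its companion equality $Gfd = fd$ for modules of finite flat dimension).

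For part (1), I would assume $M$ is strongly $n$-Gorenstein flat, so there is a short exact sequence $0\to M\to F\to M\to 0$ with $fd(F)\leq n$ and $Tor^{n+1}(M,I)=0$ for every injective $I$. Given the $n$-step projective resolution $0\to N\to P_n\to\cdots\to P_1\to M\to 0$, I would perform the same horseshoe-type construction used in the projective case: lift the short exact sequence $0\to M\to F\to M\to 0$ stepwise to obtain a $3\times(n+2)$ commutative diagram whose middle row is
$$0\to Q\to P_n\oplus P_n\to\cdots\to P_1\oplus P_1\to F\to 0,$$
with each column the split short exact sequence $0\to X\to X\oplus X\to X\to 0$. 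Since the middle terms $P_i\oplus P_i$ are flat and $fd(F)\leq n$, the $n$-th syzygy $Q$ is flat. The first column then gives a short exact sequence $0\to N\to Q\to N\to 0$ with $Q$ flat. Dimension shifting along the projective resolution yields $Tor(N,I)\cong Tor^{n+1}(M,I)=0$ for every injective $I$, so Theorem \ref{caracterization of SG}(3) forces $N$ to be strongly Gorenstein flat, and consequently $Gfd(M)\leq n$.

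For part (2), I would assume $0\to M\to F\to M\to 0$ with $k:=fd(F)<\infty$. Applying part (1) for $n$ sufficiently large shows $Gfd(M)<\infty$, and the flat analogue of \cite[Proposition 2.27]{Holm} gives $Gfd(F)=fd(F)=k$. Feeding this into Lemma \ref{lemma3}(2) produces $Gfd(F)=\max\{Gfd(M),Gfd(M)\}=Gfd(M)$ (the equality condition $Gfd(M)\neq Gfd(M)+1$ is automatic), so $Gfd(M)=k=fd(F)$. The characterization of finite Gorenstein flat dimension \cite[Theorem 3.14]{Holm} then gives $Tor^{k+1}(M,I)=0$ for every injective $I$, and together with the assumed sequence this exhibits $M$ as strongly $k$-Gorenstein flat.

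The only genuine subtlety is that Lemma \ref{lemma3} and the tools of \cite[Theorem 3.14]{Holm} are stated over a coherent ring, so part (2) tacitly inherits that coherence hypothesis, as do standard results on Gorenstein flat modules; the diagrammatic construction in part (1) and the Tor/Ext translation are entirely formal, so I do not anticipate any new obstacle beyond the bookkeeping already present in the projective case.
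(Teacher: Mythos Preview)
Your argument for part (1) is exactly the paper's: the horseshoe diagram over the $n$-step projective resolution, producing a flat $Q$ and the short exact sequence $0\to N\to Q\to N\to 0$, followed by dimension-shifting $\mathrm{Tor}$ and Theorem~\ref{caracterization of SG}(3).

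For part (2), however, your route and the paper's diverge. You run the projective proof verbatim through Lemma~\ref{lemma3} and the flat analogues of \cite[Theorem 2.20, Proposition 2.27]{Holm}, and you correctly flag that all of these require coherence---so your argument proves only a coherent version of the statement, whereas the proposition is stated over an arbitrary ring. The paper deliberately avoids Lemma~\ref{lemma3}. For $Gfd(M)\le fd(F)$ it argues directly with the long exact $\mathrm{Tor}$ sequence: from part (1) one has $\mathrm{Tor}^{n+i}(M,I)=\mathrm{Tor}^{i}(N,I)=0$ for all $i>0$, and then the sequence $0\to M\to F\to M\to 0$ forces $\mathrm{Tor}^{i}(M,I)\cong \mathrm{Tor}^{i+1}(M,I)$ for $i>k:=fd(F)$, hence $\mathrm{Tor}^{k+1}(M,I)=0$; this already shows $M$ is strongly $k$-Gorenstein flat and, by part (1) again, $Gfd(M)\le k$. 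For the reverse inequality $fd(F)\le Gfd(M)$ the paper passes to the Pontryagin dual $0\to M^{+}\to F^{+}\to M^{+}\to 0$, uses $id(F^{+})=fd(F)$ and $Gid(M^{+})\le Gfd(M)$ \cite[Proposition 3.11]{Holm} to see that $M^{+}$ is strongly $n$-Gorenstein injective, and then invokes the injective counterpart of Proposition~\ref{n-stron implis n-Gorenstein}(2) to obtain $fd(F)=id(F^{+})=Gid(M^{+})\le Gfd(M)$. The payoff of the paper's detour is that it rests on the injective-side Lemma~\ref{injective version}, which carries no coherence hypothesis; your direct mirror is shorter but buys that brevity at the cost of an extra hypothesis the proposition does not assume.
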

\begin{proof}

\begin{enumerate}
  \item Using an $n$-step projective resolution of $M$ and \cite[Proposition
3.6]{Bennis and Mahdou1}, the proof is analogous to Proposition
\ref{n-stron implis n-Gorenstein}.
  \item  Consider an exact short sequence $(\mp)\quad 0\longrightarrow M \longrightarrow F \longrightarrow M
\longrightarrow 0$ where $fd(F)< \infty$. We claim $Gfd(M)=fd(F)$.\\
Consider an $n$ step projective resolution $$0\rightarrow N
\rightarrow P_n \rightarrow ...\rightarrow P_1 \rightarrow
M\rightarrow 0$$

From $(1)$ above $N$ is strongly Gorenstein flat module. Thus, there
is an short exact sequence $(\star)\quad 0\rightarrow N \rightarrow
P \rightarrow N  \rightarrow 0$ where $P$ is flat and $Tor(N,I)=0$
whenever $I$ is injective. Hence, from $(\star)$, for all $i>0$,
$Tor^i(N,I)=0$. So, we have $Tor^{n+i}(M,I)=Tor^i(N,I)=0$.\\
Now,  suppose that $fd(F):=k$ and let $I$ be an arbitrary injective.
From  the  short exact sequence $(\mp)$ we have the long exact
sequence
 $$....Tor^{i+1}(F,I)\rightarrow Tor^{i+1}(M,I)\rightarrow
 Tor^i(M,I)\rightarrow Tor^i(F,I)\rightarrow ...$$
 Hence, for all $i>k$,
 $Tor^i(M,I)=Tor^{i+1}(M,I)=...=Tor^{i+n}(M,I)=0$. In particular,
 $Tor^{k+1}(M,I)=0$. Consequently,  $M$ is strongly $k$-Gorenstein
 flat module. Then, from $(1)$ above $Gfd(M)\leq k=fd(F)$.\\
 Conversely, we claim
 $fd(F)\leq Gfd(M)$. Applying $Hom_{\mathbb{Z}}(-,\mathbb{Q}/\mathbb{Z})$ to the short exact
 sequence $0\rightarrow M \rightarrow F \rightarrow M \rightarrow 0$
 we get the exactness of $0\rightarrow Hom_{\mathbb{Z}}(M,\mathbb{Q}/\mathbb{Z})\rightarrow
 Hom_{\mathbb{Z}}(F,\mathbb{Q}/\mathbb{Z})\rightarrow Hom_{\mathbb{Z}}(M,\mathbb{Q}/\mathbb{Z}) \rightarrow 0$.
On the other hand, from \cite[Proposition 3.11]{Holm},
$Gid(Hom_{\mathbb{Z}}(M,\mathbb{Q}/\mathbb{Z}))\leq Gfd(M)\leq n$
and by \cite[Lemma 3.51 and Theorem 3.52]{Rotman},
$id(Hom_{\mathbb{Z}}(F,\mathbb{Q}/\mathbb{Z}))=fd(F)<\infty$. Hence,
by \cite[Theorem 2.22]{Holm} and  the injective counterpart of
Proposition \ref{carac SGP},
$Hom_{\mathbb{Z}}(M,\mathbb{Q}/\mathbb{Z})$ is strongly
$n$-Gorenstein injective module. So, from the injective counterpart
of Proposition \ref{n-stron implis n-Gorenstein} and by
\cite[Proposition 3.11]{Holm},
$$fd(F)=id(Hom_{\mathbb{Z}}(F,\mathbb{Q}/\mathbb{Z}))=Gid(Hom_{\mathbb{Z}}(M,\mathbb{Q}/\mathbb{Z}))\leq
Gfd(M)$$ Thus, we have the desired equality.
\end{enumerate}

\end{proof}

\begin{theorem}
Let $R$ be a coherent ring, $M$ be an $R$-module and $n$ be a
positive integer. Then,  $Gf(M)\leq n$ if, and only if, $M$ is a
direct summand of a strongly $n$-Gorenstein flat module.
\end{theorem}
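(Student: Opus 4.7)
The plan is to mirror the proof of Theorem \ref{direct summand SGP} (for the projective case) with the flat analogues, using coherence to guarantee that the needed ingredients (direct sums of Gorenstein flat modules, characterization of finite Gorenstein flat dimension, existence of co-proper right flat resolutions for Gorenstein flat modules) behave correctly. The argument splits into the easy direction and the harder construction.

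For the ``if'' direction, suppose $M$ is a direct summand of a strongly $n$-Gorenstein flat module $N$. By Proposition \ref{n-stron flat implis n-Gorenstein}, $Gfd_R(N)\leq n$. Since direct summands of modules of finite Gorenstein flat dimension over a coherent ring have no larger Gorenstein flat dimension (by \cite[Proposition 3.13]{Holm} or the direct sum formula $Gfd(\oplus M_i)=\sup\{Gfd(M_i)\}$ valid over coherent rings), we conclude $Gfd_R(M)\leq n$.

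For the ``only if'' direction, assume $0<Gfd_R(M)\leq n$ (the case $n=0$ follows from Proposition 1.4 on strongly Gorenstein flat modules over coherent rings). Following the template of Theorem \ref{direct summand SGP}: use the flat analogue of \cite[Theorem 2.10]{Holm} (available over coherent rings, see \cite[Theorem 3.23]{Holm}) to get an exact sequence $0\to K\to G\to M\to 0$ with $G$ Gorenstein flat and $fd(K)\leq n-1$; pick a short exact sequence $0\to G\to F\to G^0\to 0$ with $F$ flat and $G^0$ Gorenstein flat (from the definition of Gorenstein flat module via a complete flat resolution); form the pushout to obtain $0\to M\to D\to G^0\to 0$ with $fd(D)\leq n$. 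Next, take an $n$-step flat resolution of $M$, break it into short exact sequences, and continue resolving the $n$-th syzygy $G_n$ so that all further syzygies remain Gorenstein flat; simultaneously, use the co-proper right flat resolution of $G^0$ (which exists because $G^0$ is Gorenstein flat) to extend to the right. Assembling, as in the projective case, yields the ``telescope'' short exact sequence
$$0\longrightarrow N\longrightarrow Q\longrightarrow N\longrightarrow 0$$
where $N=\bigoplus_{i\geq 1}G_i\oplus M\oplus\bigoplus_{i\geq 0}G^i$ and $Q=\bigoplus_{i\geq 1}F_i\oplus D\oplus\bigoplus_{i\geq 1}F^i$, so that $fd(Q)=fd(D)\leq n$ and $Gfd(N)\leq n$ (using that $Gfd$ commutes with direct sums over coherent rings, \cite[Proposition 3.13]{Holm}). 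Then $Tor^{n+1}_R(N,I)=0$ for every injective $I$ follows from \cite[Theorem 3.14]{Holm}, giving that $N$ is strongly $n$-Gorenstein flat and $M$ is a direct summand of $N$.

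The main obstacle is ensuring that the ``telescope'' construction goes through in the flat setting: specifically, that every Gorenstein flat module $G^0$ admits a co-proper right resolution by flat modules with Gorenstein flat cosyzygies (a point where the Ext-vanishing against injectives, from coherence, is used), and that the direct sum formula for $Gfd$ is valid so that $Gfd(N)\leq n$. Both are consequences of coherence via Holm's characterizations, so the argument is structurally identical to the projective case once these inputs are in hand; no essentially new ideas are required.
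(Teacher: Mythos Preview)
Your proposal is correct and follows essentially the same approach as the paper. The paper's own proof simply states that, using \cite[Theorem 3.5]{Bennis and Mahdou1}, \cite[Proposition 3.13 and Theorems 3.14 and 3.23]{Holm}, Lemma \ref{lemma3} and Proposition \ref{n-stron flat implis n-Gorenstein}, the argument is analogous to that of Theorem \ref{direct summand SGP}; you have unpacked exactly this analogy with the same ingredients.
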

\begin{proof} Using \cite[Theorem 3.5]{Bennis and Mahdou1},
\cite[Proposition 3.13 and Theorems 3.14 and 3.23]{Holm}, Lemma
\ref{lemma3} and Proposition \ref{n-stron flat implis n-Gorenstein}
the proof of this result is analogous to the one of  Theorem
\ref{direct summand SGP}.
\end{proof}

\begin{proposition}\label{carac SGF}
For a module M and a positive  integer $n$, the following are
equivalent:
\begin{enumerate}
  \item $M$ is strongly $n$-Gorenstein flat.
  \item There is an exact sequence $0\rightarrow M \rightarrow F
  \rightarrow M \rightarrow 0$ where $fd(F)\leq n$ and $Tor^i(M,I)=0$
  for every  module $I$ with finite injective  dimension and all $i>n$.
  \item There is an exact sequence $0\rightarrow M \rightarrow F
  \rightarrow M \rightarrow 0$ where $fd(F)<\infty$ and $Tor^i(M,I)=0$
  for every injective module $P$ and all $i>n$.
\end{enumerate}
\end{proposition}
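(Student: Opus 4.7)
The plan is to mimic the pattern of Proposition \ref{carac SGP} almost verbatim, replacing $\Ext$ by $\Tor$, projective modules by flat modules, and the reference \cite[Theorem 2.20]{Holm} by its flat analogue \cite[Theorem 3.14]{Holm} (so the argument lives in the coherent setting implicit in this section). The chain of implications will be $1\Rightarrow 2\Rightarrow 3\Rightarrow 1$.

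For $1\Rightarrow 2$, the defining short exact sequence $0\to M\to F\to M\to 0$ with $\fd(F)\le n$ is already exactly what condition $(2)$ demands on the syzygy side; the only thing left to verify is the Tor-vanishing for every module $I$ of finite injective dimension and every $i>n$. By Proposition \ref{n-stron flat implis n-Gorenstein}(1) we have $\Gfd(M)\le n$, and \cite[Theorem 3.14]{Holm} then yields $\Tor^{i}_R(M,I)=0$ for all $i>n$ and all $I$ with finite injective dimension. The step $2\Rightarrow 3$ is immediate, since an injective module has injective dimension $0$.

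For the main implication $3\Rightarrow 1$, I would feed the hypothesis into the long exact sequence of Tor attached to $0\to M\to F\to M\to 0$: for each injective $I$ and each $i>n$ the sequence
\[
\cdots\to \Tor^{i}_R(M,I)\to \Tor^{i}_R(F,I)\to \Tor^{i}_R(M,I)\to \cdots
\]
has both outer terms zero, forcing $\Tor^{i}_R(F,I)=0$. Since $\fd(F)<\infty$, the Gorenstein flat dimension of $F$ coincides with its flat dimension (the flat analogue of \cite[Proposition 2.27]{Holm}), so $\Gfd(F)=\fd(F)<\infty$; applying \cite[Theorem 3.14]{Holm} again gives $\fd(F)=\Gfd(F)\le n$. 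Combining this with the assumed vanishing of $\Tor^{n+1}_R(M,I)$ for every injective $I$ recovers the definition of a strongly $n$-Gorenstein flat module.

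The only delicate point is the passage from ``$\Tor^{i}(F,I)=0$ for all injective $I$ and $i>n$'' to ``$\fd(F)\le n$''; this is exactly where the coherence assumption needed by Holm's Theorem 3.14 enters, and is the step I would flag as the main obstacle, handled by the same trick (finite Gorenstein flat dimension plus Tor-vanishing against injectives) that Holm uses in his Theorem 3.14. The rest is a direct transcription of the proof of Proposition \ref{carac SGP} to the flat setting.
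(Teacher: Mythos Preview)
Your chain of implications and the use of the long exact sequence for $\Tor$ mirror the paper's structure, but you take a detour through \cite[Theorem 3.14]{Holm} at two places where the paper argues directly, and this detour costs you generality: Holm's Theorem~3.14 requires the ring to be coherent, whereas Proposition~\ref{carac SGF} is stated (and proved in the paper) over an arbitrary ring.

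For $1\Rightarrow 2$, rather than invoking $\Gfd(M)\le n$ and then \cite[Theorem 3.14]{Holm}, the paper takes an $n$-step projective resolution $0\to N\to P_n\to\cdots\to P_1\to M\to 0$; by Proposition~\ref{n-stron flat implis n-Gorenstein}(1), $N$ is strongly Gorenstein flat, so there is $0\to N\to P\to N\to 0$ with $P$ flat and $\Tor(N,I)=0$ whenever $\id(I)<\infty$. Iterating this short exact sequence gives $\Tor^i(N,I)=0$ for all $i>0$, hence $\Tor^{n+i}(M,I)=\Tor^i(N,I)=0$. No coherence is used.

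For $3\Rightarrow 1$, you first establish $\Tor^i(F,I)=0$ for all injective $I$ and $i>n$ exactly as the paper does, but then you pass through $\Gfd(F)=\fd(F)$ and \cite[Theorem 3.14]{Holm} to conclude $\fd(F)\le n$. The paper instead argues directly: assuming $m:=\fd(F)>n$, embed an arbitrary module $L$ into an injective $I$ to get $0\to L\to I\to I/L\to 0$; the long exact sequence and the vanishing just proved give $\Tor^i(F,L)\cong\Tor^{i+1}(F,I/L)$ for all $i>n$, so $\Tor^m(F,L)\cong\Tor^{m+1}(F,I/L)=0$. Since $L$ was arbitrary this forces $\fd(F)\le m-1$, a contradiction. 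This dimension-shifting trick is elementary and again avoids coherence.

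In short, your argument is correct over coherent rings, but the paper's more hands-on approach proves the proposition in the generality stated. The fix is simply to replace both appeals to \cite[Theorem 3.14]{Holm} with the direct arguments above.
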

\begin{proof}\
$1\Rightarrow 2.$ Assume that $M$ is strongly $n$-Gorenstein flat
module. Then, there is an exact sequence $0\rightarrow M \rightarrow
F \rightarrow M \rightarrow 0$ where $fd(F)\leq n$. On the other
hand, if $0\rightarrow N \rightarrow P_n \rightarrow ... \rightarrow
P_1 \rightarrow M \rightarrow 0$ is an $n$-step projective
resolution of $M$, by Proposition \ref{n-stron flat implis
n-Gorenstein}, $N$ is strongly Gorenstein flat module. Thus,
 there is a short exact sequence $(\star)\quad 0\rightarrow N
\rightarrow P \rightarrow N  \rightarrow 0$ where $P$ is flat and
$Tor(N,I)=0$ whenever $id(I)<\infty $ (from \cite[Proposition
3.6]{Bennis and Mahdou1}). Hence, from $(\star)$, for all $i>0$,
$Tor^i(N,I)=0$. So, we have $Tor^{n+i}(M,I)=Tor^i(N,I)=0$, as desired.\\

$2\Rightarrow 3.$ Obvious.\\

$3\Rightarrow 1.$ As in the proof of Proposition \ref{carac
SGP}$(3\Rightarrow 1)$, we prove that for every injective module $I$
and all $i>n$ we have $Tor^i(F,I)=0$. Suppose that $m:=fd(F)>n$ and
let $M$ be an arbitrary module. Pick a short exact sequence
$0\rightarrow M \rightarrow I \rightarrow I/M\rightarrow 0$ where
$I$ is injective. So, we have the long exact sequence
$$...\rightarrow Tor^{i+1}(F,I)\rightarrow
Tor^{i+1}(F,I/M)\rightarrow Tor^i(F,M)\rightarrow
Tor^i(F,I)\rightarrow...$$

Thus, for $i>n$ we have $Tor^i(F,M)=Tor^{i+1}(F,I/M)$. Hence,
$Tor^m(F,M)=Tor^{m+1}(F,I/M)=0$. Then,  $fd(F)\leq m-1$. Absurd.
Thus, $fd(F)\leq n$. So, it is clear that $M$ is strongly
$n$-Gorenstein flat module, as desired.
\end{proof}
\begin{proposition}\label{short sequence like holm2}
Let  $M$ be a  strongly $n$-Gorenstein flat  module over a coherent
ring $R$ ($n\geq 1$). Then, there is  an epimorphism
$\varphi:N\twoheadrightarrow M$
  where $N$ is strongly Gorenstein flat and
$K = Ker(\varphi)$ satisfies $fd(K)=Gfd(M)-1\leq  n - 1$.
\end{proposition}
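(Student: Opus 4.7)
The plan is to mirror the proof of Proposition \ref{short sequence like holm}, translating the projective argument into the flat setting and leaning on coherence of $R$ where projectivity was previously invoked. First, by Proposition \ref{n-stron flat implis n-Gorenstein}(1), fix an $n$-step projective resolution
$$0 \to N \to P_n \to \cdots \to P_1 \to M \to 0$$
whose $n$-th syzygy $N$ is strongly Gorenstein flat. Using the characterization in \cite[Proposition 3.6]{Bennis and Mahdou1}, choose a short exact sequence $0 \to N \to F \to N \to 0$ with $F$ flat and $Tor(N,I)=0$ for every injective $I$, and splice $n$ copies to obtain an exact sequence
$$0 \to N \to F \to F \to \cdots \to F \to N \to 0$$
with $n$ intermediate copies of $F$. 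This plays the role of the periodic projective resolution appearing in the proof of Proposition \ref{short sequence like holm}.

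Next, I would construct a chain map from this periodic flat resolution (top row) to the projective resolution of $M$ (bottom row) equal to the identity on the leftmost $N$, by producing component maps $F \to P_i$ for $i=1,\dots,n$ and a map $N \to M$ on the right. Forming the mapping cone then yields an exact complex whose intermediate terms are direct sums $P_i \oplus F$ (hence flat), except for the single term $P_1 \oplus N$, which is strongly Gorenstein flat by Proposition \ref{sum and product}(1). The last differential of the cone delivers the desired epimorphism $\varphi: P_1 \oplus N \twoheadrightarrow M$; its kernel $K$ sits as a syzygy in a flat resolution of length at most $n-1$ built from the remaining mapping-cone terms, so $fd(K)\le n-1$. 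The equality $fd(K)=Gfd(M)-1$ then follows by applying Lemma \ref{lemma3} to $0 \to K \to P_1 \oplus N \to M \to 0$, since $Gfd(P_1 \oplus N)=0$.

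The main obstacle is the chain-map construction itself: in the projective case projectivity of $Q$ gives the required lifts for free, but here $F$ is only flat and a direct lifting argument fails. The fix uses coherence of $R$ together with \cite[Proposition 3.11]{Holm}: the Tor-vanishing against injectives that defines strongly Gorenstein flat $N$ translates, via Pontryagin duality $(-)^{+}=Hom_{\mathbb{Z}}(-,\mathbb{Q}/\mathbb{Z})$, into an Ext-vanishing condition sufficient to supply the step-by-step lifts along the projective resolution of $M$. Equivalently, one may dualize the whole situation to the strongly $n$-Gorenstein injective setting (exactly as in the proof of Proposition \ref{n-stron flat implis n-Gorenstein}), apply the injective analogue of Proposition \ref{short sequence like holm}, and then dualize back using coherence.
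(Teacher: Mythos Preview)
Your outline mirrors Proposition \ref{short sequence like holm} correctly and you put your finger on the genuine difficulty: extending the identity on $N$ to a chain map from the periodic flat row to the projective resolution requires, at each stage, that a map $N\to P_i$ extend along $N\hookrightarrow F$, i.e.\ that $\Ext^1(N,P_i)=0$ for the strongly Gorenstein flat module $N$ and the projectives $P_i$. Neither of your proposed fixes delivers this. For fix (a), the Hom--tensor adjunction gives $\Ext^i(N,I^{+})\cong \Tor_i(N,I)^{+}=0$ for injective $I$, but the modules $I^{+}$ are not projective and an arbitrary projective $P_i$ is not of the form $I^{+}$; so Tor-vanishing against injectives does not convert into the Ext-vanishing against projectives that the lifting step needs. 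For fix (b), applying the injective analogue to $M^{+}$ and dualizing back produces an epimorphism onto $M^{++}$, not onto $M$; the canonical map $M\to M^{++}$ is only a split monomorphism of abelian groups, and coherence of $R$ does not make it an $R$-isomorphism. So as written the argument has a real gap precisely at the point you flagged.

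The paper's own proof is a bare reference: it says the argument follows \cite[Lemma 3.17]{Holm} together with \cite[Proposition 3.6]{Bennis and Mahdou1}, in analogy with Proposition \ref{short sequence like holm}. In other words, the authors are not claiming that the projective mapping-cone argument transports verbatim; they are pointing to Holm's lemma as the place where the extra work for the flat case over a coherent ring is carried out, and that lemma is what substitutes for the lifting-by-projectivity step. Your sketch neither invokes this nor supplies an equivalent replacement. (A minor side remark: your citation of Proposition \ref{sum and product}(1) for ``$P_1\oplus N$ is strongly Gorenstein flat'' points to the projective statement; the flat analogue is true for the same reasons, but is not the proposition you cite.)
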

\begin{proof}
Using \cite[Proposition 3.6]{Bennis and Mahdou1} and \cite[Lemma
3.17]{Holm}, the proof is analogous that in \cite[Lemma 3.17]{Holm}
and Proposition \ref{short sequence like holm}.
\end{proof}
\begin{proposition}\label{relation SGP SGF}
Let $M$ be  an $R$-module and $n$ a positive integer. Then,
following are equivalent:

\begin{enumerate}
  \item $M$ is strongly $n$-Gorenstein projective and $M$ admits a
  finite $n$-presentation.
  \item $M$ is strongly $n$-Gorenstein flat module and $M$ admits a
  finite $n+1$-presentation.
\end{enumerate}
\end{proposition}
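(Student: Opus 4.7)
The plan is to pair the defining short exact sequence of each strongly $n$-Gorenstein condition with the given presentation, use Schanuel's lemma and the horseshoe lemma to move between resolutions of consecutive lengths, and invoke the classical principle that a finitely presented flat module is projective (and its standard upgrade: a finitely $(n+1)$-presented module of flat dimension $\le n$ has projective dimension $\le n$) to toggle between the flat and projective definitions both at the top module and at its $n$-th syzygy.

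For $(1)\Rightarrow(2)$: the short exact sequence $0\to M\to P\to M\to 0$ witnessing strongly $n$-Gorenstein projectivity already satisfies $\fd(P)\le\pd(P)\le n$, and by Proposition~\ref{n-stron implis n-Gorenstein}(1) together with $\gfd(M)\le\gpd(M)\le n$ we obtain $\Tor^{n+1}(M,I)=0$ for every injective $I$; hence $M$ is strongly $n$-Gorenstein flat. To upgrade the given $n$-presentation $F_n\to\cdots\to F_0\to M\to 0$ to an $(n+1)$-presentation, Proposition~\ref{n-stron implis n-Gorenstein}(1) tells us that $N=\Ker(F_{n-1}\to F_{n-2})$ is strongly Gorenstein projective; pick a corresponding $0\to N\to Q\to N\to 0$ with $Q$ projective, and observe that $N$ finitely generated forces $Q$ finitely generated. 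Schanuel's lemma applied to this sequence and to $0\to\Ker(F_n\to F_{n-1})\to F_n\to N\to 0$ yields $\Ker(F_n\to F_{n-1})\oplus Q\cong N\oplus F_n$, and since the right-hand side is finitely generated the kernel $\Ker(F_n\to F_{n-1})$ is finitely generated, producing the desired $(n+1)$-presentation.

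For $(2)\Rightarrow(1)$: the finite $n$-presentation is automatic. From the strongly $n$-Gorenstein flat sequence $0\to M\to F\to M\to 0$ with $\fd(F)\le n$, the horseshoe lemma applied to two copies of the $(n+1)$-presentation of $M$ produces a finite $(n+1)$-presentation of $F$; the $n$-th syzygy of $F$ is then finitely presented and flat, hence projective, so $\pd(F)\le n$. It remains to verify $\Ext^{n+1}(M,Q)=0$ for every projective $Q$; dimension shifting reduces this to $\Ext^1(\Omega^n(M),Q)=0$, where $\Omega^n(M)$ is finitely presented (because $\Omega^{n+1}(M)$ is finitely generated) and strongly Gorenstein flat by Proposition~\ref{n-stron flat implis n-Gorenstein}(1). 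In the strongly flat sequence $0\to\Omega^n(M)\to G\to\Omega^n(M)\to 0$ the module $G$ is then finitely presented and flat, hence projective, and the remaining Ext-vanishing follows from Pontryagin duality: $Q^+$ is injective whenever $Q$ is projective, and the adjunction $\Ext^i(\,\cdot\,,Y^+)\cong\Tor^i(\,\cdot\,,Y)^+$ combined with the finite presentation of $\Omega^n(M)$ converts the Tor-vanishing against injectives into Ext-vanishing against an arbitrary projective $Q$.

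The main obstacle is this last conversion in $(2)\Rightarrow(1)$: the duality formula delivers Ext-vanishing directly only against character modules of the form $Y^+$, so reaching an arbitrary projective $Q$ genuinely relies on the finite presentation hypothesis, which is what makes Ext functors behave well under the direct-sum and summand decompositions needed to pass from free modules back to $Q$. The Schanuel step in $(1)\Rightarrow(2)$ and the $\fd\to\pd$ upgrade are comparatively routine by contrast.
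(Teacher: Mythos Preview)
Your overall architecture matches the paper's: pass to the $n$-th syzygy $N=\Omega^n(M)$, toggle between the SG-projective and SG-flat conditions there, and handle the middle term $P$ (resp.\ $F$) separately using that a finitely presented flat module is projective. The paper does exactly this, but at the crucial toggle it simply invokes \cite[Proposition 3.9]{Bennis and Mahdou1}, which says that a module is finitely generated strongly Gorenstein projective if and only if it is finitely presented strongly Gorenstein flat. Both of your gaps sit precisely where that citation does the work.

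In $(1)\Rightarrow(2)$ you obtain $\Tor^{n+1}(M,I)=0$ from the inequality $\gfd(M)\le\gpd(M)$. Over an arbitrary commutative ring this inequality is \emph{not} available; Holm proves that Gorenstein projectives are Gorenstein flat only over (right) coherent rings, and the Tor-characterisation of $\gfd$ you would then need likewise requires coherence. The paper avoids this by arguing at the syzygy: $N$ is finitely generated strongly Gorenstein projective, hence by \cite[Proposition 3.9]{Bennis and Mahdou1} strongly Gorenstein flat, whence $\Tor^{n+1}(M,I)=\Tor_1(N,I)=0$ directly.

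In $(2)\Rightarrow(1)$ your Pontryagin-duality step is a genuine gap, and you flag it yourself. The isomorphism $\Ext^i(N,Y^+)\cong\Tor_i(N,Y)^+$ together with $\Tor_1(N,I)=0$ for injective $I$ gives only $\Ext^1(N,I^+)=0$; an arbitrary projective $Q$ is not of the form $I^+$, and ``finite presentation makes Ext behave well under direct sums and summands'' does not by itself close the distance to $\Ext^1(N,R)=0$. A correct argument here has real content: one uses that the periodic resolution $\cdots\to G\to G\to N\to 0$ (with $G$ finitely generated projective, as you observed) makes $N$ of type FP$_\infty$, so $\Ext^1(N,-)$ commutes with filtered colimits and with direct sums, reducing to $\Ext^1(N,R)=0$; then one exploits the pure embedding $R\hookrightarrow R^{++}$, the vanishing $\Ext^1(N,R^{++})=\Ext^1(N,(R^+)^+)\cong\Tor_1(N,R^+)^+=0$, and the fact that $\Hom(N,-)$ preserves pure exact sequences for finitely presented $N$. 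That is essentially the proof of \cite[Proposition 3.9]{Bennis and Mahdou1}, which is why the paper just cites it. Your sketch does not supply these steps, so as written the implication is incomplete.
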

\begin{proof}
Any way, in this Proposition $M$ admits a finite $n$-presentation.
Thus, we can consider an $n$-step free resolution $0\rightarrow N
\rightarrow F_n\rightarrow ...\rightarrow F_1\rightharpoondown
M\rightarrow 0$ where $F_i$ are finitely generated free and $N$ is
finitely generated.\\
$(1)$ If $M$ is strongly $n$-Gorenstein projective module, then $N$
is a finitely generated strongly Gorenstein projective module. Thus,
from \cite[Proposition 3.9]{Bennis and Mahdou1}, $N$ is a finitely
presented strongly Gorenstein flat module. Then, $M$ admits a finite
$(n+1)$-presentation and , for all injective module $I$, we have
$Tor^{n+1}(M,I)=Tor(N,I)=0$. On the other hand, there  is an exact
sequence $0\rightarrow M \rightarrow Q \rightarrow M \rightarrow 0$
where $fd(Q)\leq pd(Q)\leq n$. Consequently, $M$ is strongly
$n$-Gorenstein flat module which admits a finite
$n+1$-presentation.\\
$(2)$ Now, if  $M$ is strongly $n$-Gorenstein flat module which
admits a finite $(n+1)$-presentation. Then, $N$ is a finitely
presented strongly flat module. Thus, from \cite[Proposition
3.9]{Bennis and Mahdou1}, $N$ is  strongly Gorenstein projective
module. Hence, for every projective module $P$,
$Ext^{n+1}(M,P)=Ext(N,P)=0$. On the other hand, there is an exact
sequence $0\rightarrow M \rightarrow F \rightarrow M \rightarrow 0$
where $fd(F)\leq n$. But, from this short exact sequence we see that
$F$ also admits a finite $(n+1)$-presentation. Thus,
$pd(F)=fd(F)\leq n$. Consequently, $M$ is strongly $n$-Gorenstein
projective module, as desired.
\end{proof}

\begin{corollary}\label{corollary}
If $R$ is a coherent ring and $M$ a finitely presented module. Then,
$M$ is strongly $n$-Gorenstein projective if, and only if, $M$ is
strongly $n$-Gorenstein flat.
\end{corollary}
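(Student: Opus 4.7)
The plan is simply to reduce the corollary to Proposition \ref{relation SGP SGF} by removing the finite presentation hypotheses that appear there. The key observation is the standard fact that over a coherent ring $R$, every finitely presented $R$-module admits a finite $k$-presentation for every positive integer $k$. Indeed, if $M$ is finitely presented, we may take a short exact sequence $0 \to K_1 \to F_0 \to M \to 0$ with $F_0$ finitely generated free. Coherence forces $K_1$ to be finitely presented as well, and iterating the argument we build, step by step, an infinite resolution $\cdots \to F_k \to F_{k-1} \to \cdots \to F_0 \to M \to 0$ in which each $F_i$ is finitely generated free. In particular, $M$ admits a finite $n$-presentation \emph{and} a finite $(n+1)$-presentation simultaneously.

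With this observation in hand, the proof is a one-line invocation. Assume first that $M$ is strongly $n$-Gorenstein projective. Since $M$ is finitely presented over the coherent ring $R$, it admits a finite $n$-presentation, so condition $(1)$ of Proposition \ref{relation SGP SGF} is satisfied. The proposition then yields condition $(2)$, i.e.\ $M$ is strongly $n$-Gorenstein flat (and also admits a finite $(n+1)$-presentation, though we do not need this conclusion). Conversely, if $M$ is strongly $n$-Gorenstein flat, then coherence together with finite presentation again provides the requisite finite $(n+1)$-presentation, so condition $(2)$ of Proposition \ref{relation SGP SGF} holds, and the proposition gives $(1)$, so $M$ is strongly $n$-Gorenstein projective.

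There is no real obstacle here: the corollary is essentially a repackaging of Proposition \ref{relation SGP SGF} under the coherence hypothesis, whose sole role is to supply the missing finite presentations in higher syzygies automatically. If anything, the only point worth being careful about is to cite (or briefly recall) the standard coherence-plus-finitely-presented-implies-finitely-presented-syzygies lemma, which is exactly what makes the two finite presentation hypotheses in Proposition \ref{relation SGP SGF} redundant in the present setting.
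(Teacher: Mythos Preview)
Your proposal is correct and follows exactly the intended route: the paper states the corollary without explicit proof precisely because it is an immediate consequence of Proposition \ref{relation SGP SGF} once one uses the standard fact that over a coherent ring every finitely presented module admits a finite $k$-presentation for all $k$. Your write-up spells this out cleanly and there is nothing to add.
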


Finally, it is  clear that for a module $M$ and a positive integer $n$ we have:\\

$$`` fd(M)\leq n"\Longrightarrow ``M \;is \;strongly\; n-Gorenstein \; flat" \Longrightarrow ``Gf(M)\leq n"$$

Also, the converse are false, in general, by the same Examples
(\ref{M sgp dont implies proje} and \ref{Gp don't implis SGp}) in
section 2 since $T$ is Noetherian and $E$ is finitely presented
(since $E$ is finitely generated and $T$ is Noetherian) (by
Corollary \ref{corollary}).

\end{section}

%%%%%%%%%%%%%%%%%%%%%%%%%%%%%%%%%%%%%%%%%%%%%%%%%%%%%%%%%
%%%%%%%%%%%%%%%%%%%%%%%%%%%%%%%%%%%%%%%%%%%%%%%%%%%%%%%%%
%%%REFERENCES%%%%%%%%%%%%%%%%%%%%%%%%%%%%%%%%%%%%%%%%%%%%
%%%%%%%%%%%%%%%%%%%%%%%%%%%%%%%%%%%%%%%%%%%%%%%%%%%%%%%%

%%%%%%%%%%%%%%%%%%%%%%%%%%%%%%%%%%%%%%%%%%%%%%%%%%%%

%%%%%%%%%%%%%%%%%%%%%%%%%%%%%%%%%%%%%%%%%%%%%%%%%%%%

\bigskip\bigskip

%%%%%%%%%%%%%%%%%%%%%%%%%%%%%%%%%%%%%%%%%%%%%%%%%%%%%%%%

\begin{thebibliography}{999}\addcontentsline{toc}{section}{\protect\numberline{}{Bibliography}}

\bibitem{Aus bri}
M. Auslander and M. Bridger;  \textit{Stable module theory},
Memoirs. Amer. Math. Soc., 94, American Mathematical Society,
Providence, R.I., 1969.
\bibitem{Bennis and Mahdou1}
D. Bennis and N. Mahdou;  \textit{Strongly Gorenstein projective,
injective, and flat modules},  J. Pure Appl. Algebra  \textbf{210}
(2007), 437-445.
\bibitem{Bennis and Mahdou2}
D. Bennis and N. Mahdou; \textit{Global Gorenstein Dimensions},
submitted for publication. Available from math.AC/0611358 v3 8 Nov
2008.
\bibitem{Bennis and Mahdou3}D. Bennis and N.
Mahdou; \textit{Global Gorenstein dimensions of polynomial rings and
of direct products of rings}, Accepted for publication in Houston
Journal of Mathematics. Available from math.AC/0712.0126 v1 2 Dec
2007.


\bibitem{bourbaki}   N. Bourbaki, \textit{Alg$\grave{e}$bre Homologique}, Chapitre 10, Masson, Paris,
(1980).
\bibitem{Christensen}
L. W. Christensen; \textit{Gorenstein dimensions}, Lecture Notes in
Math., 1747, Springer, Berlin, (2000).

\bibitem{Christensen and Frankild}
L. W. Christensen, A. Frankild, and H. Holm; \textit{On Gorenstein
projective, injective and flat dimensions - a functorial description
with applications}, J. Algebra 302 (2006), 231–279.
\bibitem{Enochs relative}E. E. Enochs and
O. M. G. Jenda;\textit{Relative homological algebra, de Gruyter
Expositions in Mathematics}, vol. 30, Walter de Gruyter  Co.,
Berlin, (2000).
\bibitem{Enochs}
E. Enochs and O. Jenda; \textit{On Gorenstein injective modules},
Comm. Algebra 21 (1993), no. 10, 3489-3501.

\bibitem{Enochs2}
E. Enochs and O. Jenda; \textit{Gorenstein injective and projective
modules}, Math. Z. 220 (1995), no. 4, 611-633.

\bibitem{Eno Jenda Torrecillas}
E. Enochs, O. Jenda and B. Torrecillas; \textit{Gorenstein flat
modules}, Nanjing Daxue Xuebao Shuxue Bannian Kan 10 (1993), no. 1,
1-9.
\bibitem{Glaz} S. Glaz; \textit{Commutative Coherent Rings}, Springer-Verlag,
Lecture Notes in Mathematics, 1371 (1989).
\bibitem{Holm}
H. Holm; \textit{Gorenstein homological dimensions}, J. Pure Appl.
Algebra 189 (2004), 167–193.
\bibitem{Kaplansky}I. Kaplansky; \textit{Commutative Rings}, Allyn and Bacon, Boston,
(1970).
 \bibitem{Costa Mahdou}N. Mahdou; \textit{On Costa's conjecture}, Comm. Algebra 29 (7) (2001) 2775-2785.
\bibitem{Nicholson} W. K. Nicholson and M. F. Youssif; \textit{Quasi-Frobenius Rings},
Cambridge University Press, vol. 158, 2003.
\bibitem{Rotman} J. Rotman; \textit{An Introduction to Homological Algebra},
Academic press, Pure and Appl. Math, A Series of Monographs and
Textbooks, 25 (1979).
\bibitem{Weibel} C. A. Weibel; \textit{An
Introduction to Homological Algebra}, Cambridge University Press,
Cambridge, Cambridge studies in advanced mathematics, 38 (1994).

\end{thebibliography}
\end{document}